\newfont{\bb}{msbm10 at 12pt}
\def\d{\hbox{\bb D}}
\def\r{\hbox{\bb R}}
\def\h{\hbox{\bb H}}
\def\hr{\hbox{\bb H}^2\times\hbox{\bb R}}
\def\hc{\hbox{\bb H}^2\times \{0 \}}
\def\m{\hbox{\bb M}^2}
\def\l{\hbox{\bb L}}
\def\pt{\frac{\partial\ }{\partial t}}
\def\s{\hbox{\bb S}}
\def\o{\hbox{\bb O}}
\newcommand{\si} {\s ^1 _{\infty}}
\newcommand{\g} {g_{-1}}
\newcommand{\norm}[1]{\left\Vert #1 \right\Vert}
\newcommand{\abs}[1]{\left\vert #1 \right\vert}
\newcommand{\set}[1]{\left\{#1\right\}}
\newcommand{\meta}[2]{\langle #1,#2 \rangle }
\newcommand{\eps}{\varepsilon}
\newcommand{\parzc}{\partial_{\bar{z}}}
\newcommand{\parz}{\partial_z}
\newcommand{\ec}{\overline{E}}
\newcommand{\ac}{\overline{\alpha}}
\newcommand{\hz}{h_{z}}
\newcommand{\hzb}{h_{\overline{z}}}
\newcommand{\zb}{\overline{z}}
\newcommand{\nzb}{\nu _{\overline{z}}}
\newcommand{\bi}{\partial _{\infty}}
\numberwithin{equation} {section}
\begin{document}

\theoremstyle{plain}\newtheorem{lema}{Lemma}[section]
\theoremstyle{plain}\newtheorem{pro}{Proposition}[section]
\theoremstyle{plain}\newtheorem{teo}{Theorem}[section]
\theoremstyle{plain}\newtheorem{ex}{Example}[section]
\theoremstyle{plain}\newtheorem{remark}{Remark}[section]
\theoremstyle{plain}\newtheorem{corolario}{Corollary}[section]
\theoremstyle{plain}\newtheorem{defi}{Definition}[section]

\begin{center}
\rule{15cm}{1.5pt} \vspace{0.5cm}

{\Large \bf Complete surfaces with positive extrinsic curvature\\[2mm] in product
spaces.}

\vspace{0.5cm}

{\large José M. Espinar$\,^\dag$, José A. Gálvez$\,^\dag$, Harold Rosenberg$\,^\ddag$}\\
\vspace{0.3cm} \rule{15cm}{1.5pt}
\end{center}

\vspace{.5cm}

\noindent $\mbox{}^\dag$ Departamento de Geometría y Topología, Universidad de
Granada, 18071 Granada, Spain; e-mails: jespinar@ugr.es; jagalvez@ugr.es\vspace{0.2cm}

\noindent $\mbox{}^\ddag$ Institut de Mathématiques, Université Paris VII, 2 place
Jussieu, 75005 Paris, France; e-mail: rosen@math.jussieu.fr

\vspace{.3cm}

\begin{abstract}
We prove that every complete connected immersed surface with positive extrinsic
curvature $K$ in $\hr$ must be properly embedded, homeomorphic to a sphere or a
plane and, in the latter case, study the behavior of the end. Then, we focus our
attention on surfaces with positive constant extrinsic curvature ($K- $surfaces). We
establish  that the only complete $K-$surfaces in $\s^2\times\r$ and $\hr$ are
rotational spheres.  Here are the key steps to achieve this. First height estimates
for compact $K-$surfaces in a general ambient space $\m\times\r$ with boundary in a
slice are obtained. Then   distance estimates for compact $K-$surfaces (and
$H-$surfaces) in $\hr$ with boundary on a vertical plane are obtained. Finally we
construct a quadratic form with isolated zeroes of negative index.
\end{abstract}

\section{Introduction.}

In $1936$, J. Stoker \cite{S} generalized the result of J. Hadamard \cite{H}   that
a compact strictly locally convex surface in the Euclidean 3-space $\r ^3$ is
homeomorphic to the sphere. J. Stoker showed that a complete strictly locally convex
immersed surface in $\r ^3$ must be embedded and homeomorphic to the sphere or
plane. In the latter case, the surface is a graph over a planar domain. Today, this
result is known as the Hadamard-Stoker Theorem.  Strict convexity of the surface is
equivalent in $\r ^3$ to positive  Gaussian curvature. Note   that in $\r ^3$, the
Gauss equation for a surface says that the Gauss curvature, i.e. the intrinsic
curvature $K(I)$, and the Gauss-Kronecker curvature, i.e. the extrinsic curvature
$K$, are equal. In space forms, the situation is  close since both curvatures are
related by a constant.

M. Do Carmo and F. Warner \cite{CW} extended Hadamard's Theorem to the hyperbolic
3-space $\h ^3 $, assuming the surface is compact and has positive extrinsic
curvature. The complete case in $\h ^3 $ was treated by R. J. Currier in \cite{C}
and it is interesting to remark on the difference with the euclidean case. Currier's
Theorem says that a complete immersed surface in $\h ^3$ whose principal curvatures
are greater than or equal to one, is embedded and homeomorphic to the sphere or
plane. And we cannot expect a better result; it is easy construct examples of
complete embedded flat surfaces, i.e. $K=1$  in $\h^3$, homeomorphic to a cylinder.

Recently, the study of surfaces in product spaces $\m \times \r$, where $\m $ is a
Riemannian surface and $\r$ the real line, has undergone considerable development.
U. Abresch and H. Rosenberg \cite{AR} defined a holomorphic quadratic differential
on constant mean curvature surfaces in the homogeneous 3-manifolds.  This enabled
them to generalize Hopf's theorem to these spaces: immersed constant mean curvature
spheres are rotational and embedded. Aledo, Espinar and Galvez associated a
holomorphic quadratic differential to constant Gaussian curvature surfaces in $\s ^2
\times \r$ and $\h ^2 \times \r$,  \cite{AEG}.  This enabled them to prove the same
Hopf type theorem for immersed constant Gaussian spheres in these product spaces.
Also, they classified the complete surfaces with constant Gauss curvature, and
established Liebmann and  Hilbert type Theorems for these surfaces.  More precisely,

\vspace{.3cm}

{\bf Liebmann type Theorem.} Given a real constant $K(I)$, there exists  a unique
complete surface of constant Gaussian curvature $K(I) > 1$ in $\s ^2 \times \r$ ,
(up to isometry), and a unique complete surface of constant Gaussian curvature $K(I)
> 0$ in $\hr$. In addition, these surfaces are rotationally symmetric embedded spheres.

\vspace{.3cm}

{\bf Hilbert type Theorem.} There is no complete immersion of constant Gaussian
curvature $K(I) < -1$ into $\hr$ or $\s ^2 \times \r$.

\vspace{.3cm}

Also, there exist  complete immersions of every constant curvature $K(I) \geq -1$
into $\hr$. In \cite{AEG2} the authors prove  there are no complete immersions with
constant Gaussian curvature $0 < K(I) < 1$ in $\s ^2 \times \r$. The existence of
complete immersions with constant Gaussian curvature $-1 \leq K(I) < 0$ in $\s ^2
\times \r$ remains open.

In contrast, the case of extrinsic curvature has been rarely considered in these
spaces (see \cite{CR}). We  note that the classification of surfaces of constant
Gaussian curvature does not help us since the intrinsic and extrinsic curvature
differ by the sectional curvature function in a product space.

We center our attention on complete surfaces with positive (non constant and
constant) extrinsic curvature in $\hr$, nonetheless some of our results for constant
extrinsic curvature also work in a more general setting as we will point out.

One of the main results of this paper is that only embedded rotational spheres can
occur when $K$ is a positive constant; Theorem 7.3.

We organize the paper as follows. In Section 2 we introduce the notation and
definitions we need.   In Section 3 we establish the following Hadamard-Stoker type
Theorem in $\hr$, (the notion of simple end will be defined later).

\vspace{.3cm}

{\bf Theorem \ref{t3.1}.} Let $S$ be a complete connected immersed surface with $K
> 0$ in $\hr$. Then $S$ must be properly embedded and bounds a
strictly convex domain in $\hr$. Moreover, $S$ is homeomorphic to $\s ^2$ or $\r^2$.
In the latter case, $S$ is a graph over a convex domain of $\hc$ or $S$ has a simple
end.

\vspace{.3cm}

This result suggests that  surfaces with positive extrinsic curvature in $\hr$
behave like surfaces with $K>0$ in $\r ^3$, rather than surfaces with $K>0$ in $\h
^3$. This is because there are many totally geodesic foliations of $\hr$ by vertical
planes which are isometric to $\r ^2$.

In Section 4 we construct complete embedded surfaces with positive extrinsic
curvature with a simple end in $\hr$. In Section 5 we classify the complete
revolution surfaces of positive constant extrinsic curvature in $\hr$ which are
topological spheres. Hereafter we will refer to surfaces with positive constant
extrinsic curvature as $K-$surfaces.

In Section 6 we establish vertical height estimates for $K-$surfaces in $\m \times
\r$, $\m $ a Riemannian surface. More precisely,

\vspace{.3cm}

{\bf Theorem \ref{testver}.} Let $\psi : S \longrightarrow \m \times \r$ be a
compact graph on a domain $\Omega \subset \m $, with positive constant extrinsic
curvature $K$ and whose boundary is contained in the slice $\m \times \set{0}$. Let
$k$ be the minimum of the Gauss curvature on $\Omega \subset \m $. Then, there
exists a constant $c_K $ (depend only on $K$ and $k$) such that $h(p) \leq c_K$ for
all $p \in S$, ($h$ is the height function on the graph).

\vspace{.3cm}

Also, horizontal height (or distance) estimates are obtained,

\vspace{.3cm}

{\bf Theorem \ref{testhor}.} Let $S$ be a compact embedded surface in $\hr$, with
extrinsic curvature a constant $K>0$. Let $P$ be a vertical plane in $\hr$ and
assume $\partial S \subset P$. Then the distance from $S$ to $P$ is bounded; i.e.,
there is a constant $d$, independent of $S$, such that
$$ \mbox{dist}(q , P) \leq d \, , \quad \forall q \in S .$$

\vspace{.3cm}

We remark that the proof of this result works for $H-$surfaces in $\hr$ with $H>
1/2$, thus this result, together with the vertical height estimates given in
\cite{AEG1} for $H-$surfaces with $H>1/2$, generalizes \cite[Theorem 1.1]{NR} for
$H-$surfaces with $H>1/2$. More precisely,

\vspace{.3cm}

{\bf Theorem \ref{lnor2}.} For $K>0$ (or $H > 1/2$) there is no properly embedded
$K-$surface ($H-$surface) in $\hr$ with finite topology and one end.

\vspace{.3cm}

Finally, in Section 7 we classify the complete immersed $K-$surfaces in $\hr$ and
$\s ^2 \times \r$.

\vspace{.3cm}

{\bf Theorem \ref{tKconstant}.} The complete immersions with positive constant
extrinsic curvature $K$ in $\hr$ and $\s ^2 \times \r$ are the rotational spheres
given in Section 5.

\section{Notations.}

In Sections 2, 3 and 4 we will use the \emph{Poincaré disk model} of $\h ^2$. In
Section 5 we will work in  the hyperboloid of one sheet model of  $\h^2$ in $\l ^3$.
We make this precise in Section 5.

In the Poincaré model, $\h ^2$ is represented as the domain
$$\d =\set{z\equiv (x,y)
\in \r ^2 \, : |z|^2= x^2 + y^2 < 1}$$ endowed with the metric $\g=
\dfrac{4|dz|^2}{(1-|z|^2)^2}$.

The complete geodesics in this model are given by arcs of circles or  straight lines
which are orthogonal to the boundary at infinity
$$\s ^1 _{\infty} = \set{z \in \r^2 \, : |z| =1}.$$

Thus, the asymptotic boundary of a set $\Omega \subset \h^2$ is
$$ \bi \Omega = \mbox{cl}(\Omega ) \cap \s ^1 _{\infty} $$
where $\mbox{cl}(\Omega)$ is the closure of $\Omega$ in $\set{z \in \r ^2 \, :
|z|\leq 1}$.

We  orient $\h ^2 $ so that its boundary
at infinity is oriented counter-clockwise. Let $\gamma $ be a complete oriented geodesic in
$\h^2$, then
$$ \bi \gamma = \set{ \gamma ^{-} , \gamma ^{+} } $$where
$\gamma ^{-} = \lim _{t \rightarrow - \infty}\gamma (t)$ and $\gamma ^{+} = \lim _{t
\rightarrow + \infty}\gamma (t)$.   Here $t$ is  arc length along $\gamma$. We will
often identify a  geodesic $\gamma $ with its boundary at infinity, writing
\begin{equation}\label{2.4}
\gamma = \set{\gamma ^{-} , \gamma ^{+}}
\end{equation}

\begin{defi}\label{d2.1}
Let $\theta _1 , \theta _2 \in \si $, we define the oriented geodesic joining
$\theta _1 $ and $\theta _2 $, $\gamma (\theta _1 , \theta _2)$, as the oriented
geodesic from $\theta _1 \in \si$ to $\theta _2 \in \si$.  Here we represent points
on the circle as real numbers (angles) by their image by the exponential map.
\end{defi}

We observe that given an oriented geodesic $\gamma =\set{\gamma ^{-} ,\gamma ^{+}}$
in $\h ^2$ then $\h ^2 \setminus \gamma $ has two connected components. We will
distinguish them using the following notation,
\begin{defi}\label{d2.2}
Let $J$ be the standard counter-clockwise rotation operator. We call exterior set of
$\gamma$ in $\h^2$, $\mbox{ext}_{\h^2} (\gamma )$, the connected component of $\h^2
\setminus \gamma$ towards which  $J \gamma ' $ points. The other connected component
of $\h^2 \setminus \gamma$ is called the interior set of $\gamma $ in $\h ^2$ and
denoted by $\mbox{int}_{\h^2} (\gamma )$.
\end{defi}

On the other hand, we consider the product space $\hr$ represented as the domain
$$ \hr =\set{ (x,y ,t) \in \r ^3 \, : \,  x^2 + y^2 < 1}$$
endowed with the product metric $\meta{}{} = g_{-1} + dt ^2$. In addition, we denote
by $\pi : \hr \longrightarrow \hc $ the usual projection and $\pt$ the gradient of
the function $t$ in $\hr$.

Given a complete oriented geodesic $\gamma$ in $\hc$, we will call $\gamma \times
\r$ a \emph{vertical plane} of $\hr$ and we will call a slice $\h ^2 \times \{ \tau
\}$ a \emph{horizontal plane}. Note that a vertical plane is isometric to $\r^2$ and
a horizontal plane is isometric to $\h ^2$.

The notions of the interior and exterior domains of a horizontal oriented geodesic
extend naturally to vertical planes.
\begin{defi}\label{d2.3}
For a complete oriented geodesic $\gamma$ in $\hc\equiv\h^2$ we call, respectively,
interior and exterior of the vertical plane  $P=\gamma \times \r$ the sets
$$
\mbox{int}_{\hr}(P)= \mbox{int}_{\h ^2}(\gamma) \times \r,\qquad
\mbox{ext}_{\hr}(P)= \mbox{ext}_{\h ^2}(\gamma) \times \r.
$$
\end{defi}

We will often use  foliations by vertical planes of  $\hr$.  We now make this precise.

\begin{defi}\label{d2.4}
Let $P$ be a vertical plane in $\hr$ and let $\gamma (t)$ be an oriented horizontal
geodesic in $\hc$, with $t$   arc length along $\gamma$,   $\gamma (0) = p_0 \in P$,
$\gamma ' (0)$ orthogonal to $P$ at $p_0$ and $\gamma (t) \in \mbox{ext}_{\hr}(P)$
for $t>0$. We define the \emph{oriented foliation of vertical planes along
$\gamma$}, denoted  by $P_{\gamma}(t)$, to be the vertical planes orthogonal to
$\gamma (t)$ with $P = P_{\gamma}(0)$.
\end{defi}

To finish, we will give a definition of a particular type of curve in a vertical plane.
\begin{defi}\label{d2.6}
Let $P$ be a vertical plane and $\alpha$ a complete embedded convex curve in $P$. We
say that $\alpha$ is \emph{vertical} (in $P$) if there exist a point $p \in \alpha
$, called a \emph{vertical point}, and a vertical direction  $v = \pm \pt $, such
that the half-line $p + s v $, $s > 0$, is contained in the convex body bounded by
$\alpha$ in $P$.
\end{defi}

\section{A Hadamard-Stoker type theorem.}

This section is devoted to the proof of a Hadamard-Stoker type Theorem in $\hr$. Let
us consider a surface $S $ and  $\psi:S \longrightarrow \hr $ an immersion with
positive extrinsic curvature $K = \mbox{det}(II)/ \mbox{det}(I)$, where $I$ and $II$
are  the first and second fundamental forms of $S$.

Observe that the definition of $K$ does not depend on the local choice of a unit
normal vector field $N$. Nevertheless, $N$ can be globally chosen since $K>0$, that
is, $II$ is definite. From now on we will identify $\psi (S)$ with $S$.

We begin with an elementary, but useful, result.
\begin{pro}\label{p3.1}
Let $S$ be an immersed surface with positive extrinsic curvature in $\hr$. Let $P$ be a
horizontal or vertical plane in $\hr$. If $S$ and $P$ intersect transversally then
each connected component $C$ of $S \cap P$ is a strictly convex curve in $P$.
\end{pro}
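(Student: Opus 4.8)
The plan is to exploit the fact that both a horizontal slice $\h ^2\times\set{\tau}$ and a vertical plane $\gamma\times\r$ are totally geodesic in $\hr$, together with the definiteness of the second fundamental form $II$ of $S$. The latter holds precisely because $K>0$, and it also lets me fix a global unit normal $N$ so that $II$ is positive definite. The computation is essentially the classical one for locally convex surfaces in a space form; here the ambient product structure enters only through the totally geodesic character of $P$.

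First I would parametrize a connected component $C$ of $S\cap P$ by arc length; transversality guarantees that $C$ is a regular curve whose unit tangent $T$ at each point $p$ spans the line $T_pS\cap T_pP$. Writing $\nabla$ for the ambient Levi--Civita connection, the key observation is that, because $P$ is totally geodesic, the ambient acceleration $\nabla_T T$ of $C$ coincides with the geodesic curvature vector of $C$ inside $P$. In particular $\nabla_T T\in T_pP$ and $\nabla_T T\perp T$, so I may write $\nabla_T T=\kappa_g\,\nu$, where $\nu$ is a unit normal to $C$ within $P$, chosen continuously along $C$, and $\kappa_g$ is the signed geodesic curvature of $C$ in $P$.

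Next I would project this identity onto $N$. Since the $S$-normal component of $\nabla_T T$ is exactly $II(T,T)\,N$, taking the inner product with $N$ yields $II(T,T)=\kappa_g\,\meta{\nu}{N}$. As $II$ is positive definite, $II(T,T)>0$ everywhere on $C$. It remains to see that $\meta{\nu}{N}\neq 0$, and this is where transversality is used: at $p$ the planes $T_pS$ and $T_pP$ meet only along $\mathrm{span}(T)$ and together span $T_p\hr$. Restricting to the $2$-plane $T^{\perp}$, let $\eta$ span $T_pS\cap T^{\perp}$; then $T_pP\cap T^{\perp}=\mathrm{span}(\nu)$, and transversality forces $\eta$ and $\nu$ to be linearly independent, hence to span $T^{\perp}$. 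Since $N\in T^{\perp}$ and $N\perp\eta$, were $\meta{\nu}{N}=0$ as well, $N$ would be orthogonal to all of $T^{\perp}$, contradicting $N\in T^{\perp}\setminus\set{0}$. Thus $\meta{\nu}{N}\neq 0$, so $\kappa_g=II(T,T)/\meta{\nu}{N}$ is nowhere zero along $C$.

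Finally, because $C$ is connected and $N$ and $\nu$ are globally and continuously defined, $\kappa_g$ varies continuously and never vanishes, hence keeps a constant sign along $C$; this says exactly that $C$ is a strictly convex curve in $P$. I expect the only delicate point to be the linear-algebra step establishing $\meta{\nu}{N}\neq 0$, together with the bookkeeping that makes the sign of $\kappa_g$ genuinely constant rather than merely locally nonzero; everything else is a direct consequence of $P$ being totally geodesic and $II$ being definite.
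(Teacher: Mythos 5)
Your proof is correct and takes essentially the same route as the paper's: both rest on $P$ being totally geodesic, so that the ambient acceleration $\bar{\nabla}_{\alpha'}\alpha'$ of $C$ coincides with its geodesic curvature vector in $P$, while its normal-to-$S$ component $II(\alpha',\alpha')\,N$ is nonzero because $K>0$ makes $II$ definite. Your linear-algebra step establishing $\meta{\nu}{N}\neq 0$ is in fact redundant, since the identity $II(T,T)=\kappa_g\,\meta{\nu}{N}>0$ already forces both factors to be nonzero, but it is harmless, as is your explicit check that $\kappa_g$ keeps a constant sign.
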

\begin{proof}
Let us parametrize $C$ as $\alpha (t)$ where $t$ is  arc length. Then since $P$ is a
totally geodesic plane, we have
\begin{equation*}
\begin{split}
\nabla ^P _{\alpha '}\alpha ' &= \bar{\nabla } _{\alpha '}\alpha ' = \nabla ^S
_{\alpha '} \alpha ' + II (\alpha ' ,\alpha ' ) N
\end{split}
\end{equation*}where $\nabla ^P$, $\bar{\nabla} $ and $\nabla ^S$ are the connections on $P$,
$\hr$ and $S$ respectively. Since  the extrinsic curvature is positive we
have $II(\alpha ',\alpha ') \neq 0$. Thus $\nabla ^P _{\alpha '}\alpha ' \neq 0$, that is, the
geodesic curvature of $C$ vanishes nowhere on $P$.
\end{proof}

\begin{defi}
Let $S \subset \hr$ be a surface. We say that $S$ has a simple end if the boundary
at infinity of $\pi(S)\subset \hc\equiv\h^2$ is a unique point
$\theta_0\in\s^1_{\infty}$ and, in addition, for all
$\theta_1,\theta_2\in\s^1_{\infty}\setminus\{\theta_0\}$ the intersection of the
vertical plane $\gamma(\theta_1,\theta_2)\times\r$ and $S$ is empty or a compact
set.
\end{defi}

Now, we can establish the main Theorem of this section.

\begin{teo}\label{t3.1}
Let $S$ be a complete connected immersed surface in $\h^2\times\r$ with $K > 0$.
Then $S$ must be properly embedded and bounds a strictly convex domain in $\hr$.
Moreover, $S$ is homeomorphic to $\s ^2$ or $\r^2$. In the latter case, $S$ is a
graph over a convex domain of $\hc$ or $S$ has a simple end.
\end{teo}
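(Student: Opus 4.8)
The plan is to exploit that $K>0$ makes $S$ locally strictly convex, together with the two natural families of totally geodesic planes in $\hr$: the horizontal slices $\h^2\times\{\tau\}$ (each isometric to $\h^2$) and the vertical planes $\gamma\times\r$ (each isometric to $\r^2$). Since $K>0$ the second fundamental form $II$ is definite, so I would first fix the global unit normal $N$ for which $II>0$; then every point of $S$ has a neighborhood lying strictly on one side of its tangent plane. I would also record that $\hr$ is simply connected with nonpositive sectional curvature, i.e. a Hadamard $3$-manifold, so convex bodies and their boundaries are well behaved.

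The first main step is to show that $S$ is properly embedded and bounds a strictly convex domain. The crucial fact is that, for a complete immersion, strict local convexity at every point forces global convexity. In $\hr$ this is the analogue of the classical convexity theorems of van Heijenoort and of do Carmo and Warner for Hadamard manifolds, and it can be seen directly via first-contact arguments with the oriented vertical-plane foliations $P_{\gamma}(t)$ of Definition \ref{d2.4}: sweeping such a foliation against $S$, at a first contact Proposition \ref{p3.1} guarantees that the corresponding transversal slice is a strictly convex curve, so the contact is one-sided and $S$ cannot cross the plane. Running this over all directions shows $S$ stays on the concave side of every tangent vertical plane, which yields that $S$ is embedded, proper, and equals the boundary of a convex body $C$; strictness of $II$ everywhere makes $C$ strictly convex. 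I expect this to be the main technical obstacle, precisely because $S$ is a priori only a complete immersion and one must upgrade pointwise convexity to global embeddedness.

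With $S=\partial C$ embedded and convex, I would determine the topology through the height function $h=t|_{S}$. At a critical point of $h$ the tangent plane is horizontal, so the slice through that point is tangent to $S$; being totally geodesic and with $II$ definite, $S$ lies locally on one side of it, so the critical point is a nondegenerate local maximum or minimum. Hence $h$ is a Morse function with no saddles. If $S$ is compact, then $m+n=\chi$ with $m,n\geq 1$ forces $\chi=2$, so by Reeb's theorem (exactly one maximum and one minimum) $S\cong\s^2$. If $S$ is noncompact it is a complete noncompact convex surface, hence homeomorphic to $\r^2$.

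Finally, for the noncompact case I would analyze the end via the projection $\pi:\hr\to\hc$, noting that $\pi(C)\subset\h^2$ is convex. The dichotomy is governed by whether $\pi|_{S}$ is injective, equivalently by the vertical behavior of $C$ and of the field $\pt$ along $S$. If $\pi|_{S}$ is injective, then $S$ is a graph over the convex domain $\Omega=\pi(S)$. If not, some vertical line meets $S$ in two points, and I would use the strict convexity of the vertical slices (Proposition \ref{p3.1}) together with the convex-body structure to control the two sheets at infinity, showing that they can be reconciled only if the asymptotic boundary $\bi\pi(S)$ degenerates to a single point $\theta_{0}$ and every separating vertical plane $\gamma(\theta_{1},\theta_{2})\times\r$ meets $S$ in a compact set, i.e. $S$ has a simple end. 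The delicate point here is the asymptotic analysis of $\pi(S)$ and the exclusion of intermediate behaviors, which again rests on the strict convexity of the vertical slices.
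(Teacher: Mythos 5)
There is a genuine gap, and it sits exactly at the point you yourself flag as the ``main technical obstacle.'' You dispose of it by appealing to an ``analogue of the convexity theorems of van Heijenoort and do Carmo--Warner for Hadamard manifolds,'' i.e.\ to the principle that for a complete immersion strict local convexity forces global convexity. No such principle holds in that generality: $\h^3$ is also a Hadamard manifold, and the equidistant cylinders about a geodesic in $\h^3$ have principal curvatures $\tanh r$ and $\coth r$, hence constant extrinsic curvature $K=1>0$; they are complete, embedded, locally strictly convex, homeomorphic to $\s^1\times\r$, and bound no strictly convex body. (The paper recalls precisely these examples in its introduction to explain why Currier's theorem needs principal curvatures $\geq 1$ rather than $K>0$.) So in $\hr$ the statement ``complete $+$ $K>0$ $\Rightarrow$ properly embedded and bounds a strictly convex domain'' is exactly the content of Theorem \ref{t3.1}, not a citable fact, and the theorem's validity must come from the special structure of $\hr$ (vertical planes isometric to $\r^2$), not from nonpositive curvature alone. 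Your first-contact sketch does not substitute for a proof: at a first contact of a plane of the foliation $P_{\gamma}(t)$ with $S$ the intersection is tangential, so Proposition \ref{p3.1} (which requires transversality) does not apply there; for a noncompact immersed surface the ``first contact'' may occur at infinity with no contact point at all; and one-sidedness with respect to tangent vertical planes does not by itself upgrade an immersion to an embedding.

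What is missing is the continuation argument that constitutes the paper's proof. When $S$ has no vertical tangent plane, one follows a single component of $P_{\gamma}(t)\cap S$ through the foliation and shows $S$ is a vertical graph. When there is a vertical tangent plane at some $p_0$, one tracks the component $C(t)$ of $P_{\gamma}(t)\cap S$ issuing from $p_0$ and proves: (i) the $C(t)$ stay embedded compact strictly convex curves until they either persist for all $t$, collapse to a point (giving $S\cong\s^2$), or drift to infinity; (ii) if a limit curve $C(\bar t)$ becomes noncompact it cannot be \emph{vertical} in the sense of Definition \ref{d2.6} --- this is a genuinely flat, two-dimensional argument inside a vertical plane $Q\cong\r^2$, where a complete strictly convex curve tangent to a line $r_0$ must meet the parallel line $r_{\bar t}$ twice; (iii) $\bi\pi(C(\bar t))$ is then a single point, and a second, rotated family of vertical planes $Q(s)$ eliminates the remaining configuration (the paper's Case B.2) and forces the simple end. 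Your treatment of the noncompact case asserts that the two sheets ``can be reconciled only if'' $\bi\pi(S)$ degenerates to one point and every separating vertical plane meets $S$ compactly --- that is the definition of a simple end restated as a conclusion, with none of (i)--(iii) supplied. Your Morse-theoretic derivation of $S\cong\s^2$ in the compact case and the graph/non-graph dichotomy via injectivity of $\pi|_S$ are sound as far as they go, but both presuppose the global embeddedness and convexity you have not established, so the proposal is circular at its key point.
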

\begin{proof}
We first distinguish two cases, depending on the existence of a point on $S$ with
horizontal unit normal, or equivalently, depending on the existence of a vertical
tangent plane.\newline

Suppose there is no point $p \in S$ with a vertical tangent plane at $p$. We will
show $S$ is a graph and homeomorphic to $\r ^2$.

Let $P$ be a vertical plane which meets $S$ transversally. Let $\gamma$ be an
oriented horizontal geodesic orthogonal to $P$ and consider the foliation
$P_{\gamma}(t)$
 of vertical planes along $\gamma$ (see Definition \ref{d2.4}). Now,
if $P_{\gamma}(t)\cap S \neq \emptyset$, using that there is no point $p \in S$ with
a vertical tangent plane at $p$ and Proposition \ref{p3.1}, each connected component
of $P_{\gamma}(t) \cap S $ is a non-compact complete embedded strictly convex curve.
Otherwise, if a connected component has a self-intersection or it is compact, then
it has a point with a vertical tangent line, which means that $S$ has a point with a
vertical tangent plane at that point.

Let $C(0)$ be an embedded component of $P\cap S=P_{\gamma}(0)\cap S$. Let us
consider how $C(0)$ varies as $t$ increases to $+\infty$. No two points of
$P_{\gamma}(t_0) \cap S$ can join at some $t_0 >0$, since this would produce a
vertical tangent plane at some point. So the component $C(0)$ of $P_{\gamma}(0)$
varies continuously to one embedded curve $C(t)$ of $P_{\gamma}(t) \cap S$ as $t$
increases. The only change possible is that $C(t)$ goes to infinity as $t$ converges
to some $t_1$ and disappears in $P_{\gamma}(t_1)$.

Similarly $C(0)$ varies continuously to one embedded curve of $P_{\gamma}(t)\cap S$
as $t \longrightarrow  -\infty$. Hence $S $ connected yields $P_{\gamma}(t) \cap S$
is at most one component for all $t$. So, we conclude $S$ is a vertical graph. To
finish, we observe that $P_{\gamma}(t) \cap S$ is empty or homeomorphic to $\r$ for
each $t$, hence $S$ is topologically $\r ^2$.\newline

Now, for the rest of the proof we suppose there is a point $p_0 \in  S$ with a
vertical plane $P$ tangent at $p_0$. We will show $S$ is homeomorphic to $\s ^2$ or
$S$ is homeomorphic to $\r ^2$ and has a simple end.

By assumption, it is easy to see that there exist neighborhoods $p_0 \in U \subset S
$ and $V \subset P$ such that $U$ is a horizontal graph over $V$. Also, because of
$K(p_0)>0$, $S$ is strictly locally convex at $p_0$, hence we can assume that $U
\subset S$ is on one side of $P$. Let $P_{\gamma}(t)$ be the foliation of vertical
planes along $\gamma $, being $\gamma$ a horizontal geodesic with $\gamma (0) = p_0$
and $\gamma ' (0)$ orthogonal to $P$. Note that, up to an isometry, we can suppose
that $U \setminus\{ p_0\} \subset \mbox{ext}_{\hr} (P)$, $p_0 \in \hc$ and $\gamma =
\set{\pi/2 , 3\pi /2}$.

From Proposition \ref{p3.1} and the fact that locally $S$ is a graph, there exist
$\eps > 0$ such that $P_{\gamma}(t) \cap U $ are embedded compact strictly convex
curves for all $0 < t < \eps$. For $0< t < \eps$, let $C(t)$ denote the connected
component of $P_{\gamma}(t)\cap S$ which coincides with $P_{\gamma}(t)\cap U$.
Perhaps $P_{\gamma}(t)\cap S $ has other components distinct from $C(t)$ for each
$0< t < \eps$, but we only care how $C(t)$ varies as $t$ increases. We also denote
by $C(t )$ the continuous variation of the curves $P_{\gamma} ( t)\cap S$, when $t >
\varepsilon$.

We distinguish two cases:

\begin{enumerate}
\item[A)] $C(t)$ remains compact as $t$ increases:

By topological arguments it is easy to show that, if  $C(t)$ remains compact and
non-empty as $t$ increases, then the $C(t)$ are embedded compact strictly convex
curves or a point.
\begin{enumerate}
\item[A.1)] If $C(t)$ remains compact and non-empty as $t \rightarrow +\infty$, then since
$S$ is connected, $S$ must be embedded. In addition, because $C(0)$ is a point and
$C(t)$ is homeomorphic to a circle for every positive $t$, $S$ is homeomorphic to
$\r^2$.

Now, from the fact that $C(t)$ remains compact, then
$$
\partial _{\infty} \pi(S) = \{3\pi/2\}\subset\s^1_{\infty}
$$
and $S$ has a simple end.

\item[A.2)] If there exists $\bar{t} > 0 $ such that $C(t)$ are compact for all
$0 < t < \bar{t}$ and the component $C(t)$ disappears for $t > \bar{t}$, then, using
that $S$ is connected, $S$ is either compact, embedded and topologically $\s ^2$ or
non compact, embedded and topologically $\r^2$. That is, if the $C(t)$ converge to a
compact set as $t$ converges to $\bar{t}$ then $C(\bar{t})$ must be a point (because
our surface has no boundary) and $S$ is a sphere. Otherwise the $C(t)$ drift off to
infinity as $t$ converges to $\bar{t}$ and $S$ is topologically a plane.

We now show that in the latter case, the vertical projection $\pi$ of $S$ has
asymptotic boundary one of the two points at infinity of $\pi(P_{\gamma}(\bar{t}))$.

Without lost of generality we can assume that $P_{\gamma}(\bar{t})= \beta \times \r$
where $\beta = \set{\beta ^{-}, \beta ^{+}}$. Consider the vertical plane $Q= \gamma
\times \r$. Let $\tilde{C}$ be the component of $Q \cap S$ containing $p_0$. First
observe that $\tilde{C}$ is compact, otherwise it would intersect the line $Q \cap
P_{\gamma}(\bar{t})$ in two points, which is not the case. Thus, we can consider the
disk $\tilde{D}$ bounded by $\tilde{C}$ on $S$.

Let $Q_{\beta}(t)$ denote the foliation by vertical planes along $\beta$,
$Q_{\beta}(0)=Q$. There exists $t_0$ (we can assume $t_0<0$) satisfying
$Q_{\beta}(t_0)$ touches $\tilde{D}$ on one side of $\tilde{D}$ by compactness. Let
$q_0\in \tilde{D}\cap Q_{\beta}(t_0)$ be the point where they touch. Consider the
variation $\tilde{C}(t)$ of $q_0$ on $S\cap Q_{\beta}(t)$ from $t=t_0$ to infinity.
Then, $\tilde{C}(t)$ is a convex embedded curve for $t$ in a maximal interval $
(t_0,\bar{t}_0)$ with $0<\bar{t}_0\leq\infty$. Hence, $S$ is foliated by the
$\tilde{C}(t)$, $\tilde{C}=\tilde{C}(0)=Q \cap S$ and
$\beta^-\not\in\partial_{\infty}\pi(S)$ because $S$ is on one side of
$Q_{\beta}(t_0)$.

Now, we will show that $\partial_{\infty}\pi (S)=\set{\beta ^{+}}$. Let $\gamma
(\theta) $ denote the complete horizontal geodesic starting at $p_0$ and making an
angle $\theta$ with $\gamma $ at $p_0$. Assume $\gamma (\theta)$ enters the side of
$Q$ containing $\beta ^{+}$, for $0 < \theta <\pi/2$. Let $\bar{\theta}$ be the
value of $\theta$ such that $\gamma (\bar{\theta})$ is asymptotic to $\beta ^{+}$.
Let $Q(\theta) = \gamma (\theta ) \times \r$. For each $\theta$,
$0\leq\theta<\bar{\theta}$, we have $S \cap Q(\theta)$ is one connected embedded
compact curve $C'(\theta)$. The proof of this is the same as the previous one for
$\tilde{C}$. Notice that each $C'(\theta)$ is non empty since $p_0 \in C'(\theta)$.

Now $C'(\bar{\theta})$ can not be compact, otherwise $S$ could not be asymptotic to
the plane $P_{\gamma} (\bar{t})$, a contradiction.

In order to complete the proof of the Case A.2 we show that $S$ has a simple end.
Observe that $C'(\theta)$ is compact, $\bar{\theta}<\theta<\pi/2$ because
$S=\cup_{0\leq t<\bar{t}} C(t)$. Moreover, $C'(\theta)\subset \tilde{D}$,
$-\pi/2<\theta<0$, and $ \tilde{D}$ is compact. Thus, it is easy to conclude that
$S$ has a simple end.

Thus we have proved that in Case A.2, $S$ is either a properly embedded sphere or
$S$ is a properly embedded plane with a simple end at $\beta ^{+}$.
\end{enumerate}

\item[B)] $C(t)$ becomes non-compact:

Let $\bar{t} >0$ be the smallest $t$ with $C(\bar{t})$ non-compact, $C(\bar{t})$ the
limit of the $C(t) $ as $t \rightarrow \bar{t}$, $C(\bar{t})$ is an embedded
strictly convex curve in $P_{\gamma}(\bar{t})$.

\textbf{Claim 1:} We now show $C(\bar{t})$ is not vertical (see Definition
\ref{d2.6}).

Let us assume that $C(\bar{t})$ is vertical, and let $q \in C(\bar{t})$ be a
vertical point. First of all, note that $ \tilde{S} = \bigcup _{0\leq t \leq
\bar{t}} C(t) \subset S$ is embedded. Let $q ' = \pi (q) \in \hc $ and let us
consider $\Gamma _{p_0 q' }$ the complete horizontal geodesic joining $p_0$ and $q
'$. Let $Q = \Gamma _{p_0 q '} \times \r$, and consider $r_0 = Q \cap P_{\gamma}
(0)$ and $r_{\bar{t}} = Q \cap P_{\gamma}(\bar{t})$. Note that $r_0$ and
$r_{\bar{t}}$ are parallel lines in $Q$. Also, $\alpha _Q = Q \cap \tilde{S}$ is a
non-compact embedded strictly convex curve in $Q$ such that $r_0$ is tangent to
$\alpha _Q$ at $p_0 \in \alpha _Q $ and $\alpha _Q \cap r_{\bar{t}}$ is exactly one
point, since $C(\bar{t})$ is vertical. But this is a contradiction because $\alpha
_Q$ is a strictly convex curve in $Q$, which is isometric to $\r ^2$, and it must
intersect $r_{\bar{t}}$ twice.

Thus, $C(\bar{t})$ is not vertical, and we claim that

\textbf{Claim 2:} $\bi \pi (C(\bar{t}))$ is one point.

Let us denote by $D(t)$ the convex body bounded by $C(t)$ in $P_{\gamma}(t)$ for
each $0 < t < \bar{t}$. Thus, the limit, $D(\bar{t})$, of $D(t)$ as $t$ increases to
$\bar{t}$ is an open convex body bounded by $C(\bar{t})$ in $P_{\gamma}(\bar{t})$
which is isometrically $\r ^2$. If $\bi \pi (C(\bar{t}))$ has two points, the only
possibility is that $C(\bar{t})$ is vertical, which is impossible by Claim 1.

Let $\delta _0> 0 $ and $t_{\delta _0} < \bar{t}$ such that $P(t_{\delta _0}) =
\Gamma (\delta _0) \times \r$ where $\Gamma (\delta _0) = \set{\delta _0, \pi -
\delta _0}$. We denote by $\tilde{S}_1 =\bigcup _{0\leq t \leq t_{\delta _0}}
C(t)\subset S$ and note that $\tilde{S}_1$ is connected and embedded.

Let us consider the complete horizontal geodesic given by $\Gamma (\delta _0
,s)=\set{\delta _0, \pi -\delta _0+ s}$ and the vertical plane $Q(s)=\Gamma (\delta
_0,s)\times \r$ for each $s \geq 0$. So, $Q(0) = P_{\gamma}(t_{\delta _0})$ and
$Q(0) \cap \tilde{S}_1 = C(t_{\delta _0})$ is an embedded compact strictly convex
curve. Let us consider how $\alpha (s) = Q(s) \cap S$ varies as $s$ increases to
$\pi +\delta _0$. At this point, we have two cases:
\begin{enumerate}
\item[B.1)] $\alpha (s) $ remains compact for all $ 0 \leq s < \pi + \delta _0$:

In this case, letting $\delta _0\rightarrow 0$, corresponds to Case A.1.

So, without lost of generality we can assume that $P_{\gamma}(\bar{t})= \Gamma
(\bar{t}) \times \r$ where $\Gamma (\bar{t})= \set{0 , \pi}$ and
\begin{equation}\label{3.6}
\bi \pi (C(\bar{t}))=\set{0} .
\end{equation}

\item[B.2)] $\alpha (s) $ becomes non-compact:

Let $0 <\bar{s} <\pi +\delta _0$ be the smallest $s$ with $\alpha(\bar{s})$
non-compact, $\alpha (\bar{s})$ is the limit of the $\alpha(s) $ as $s \rightarrow
\bar{s}$. Also,
\begin{equation*}
\bi \pi (\alpha (\bar{s})) = \{ \pi -\delta_0 + \bar{s} \},
\end{equation*}
otherwise it must be $\set{\delta _0}$ which contradicts (\ref{3.6}).

So, without lost of generality we can assume that $P_{\gamma}(\bar{t})= \Gamma
(\bar{t}) \times \r$ where $\Gamma (\bar{t})= \set{0 , \pi}$ and
\begin{equation}\label{3.6}
\bi \pi (C(\bar{t}))=\set{0} .
\end{equation}

Clearly $\delta_0<\bar{s}$. For each $\delta \leq \delta _0$ we consider the
complete horizontal geodesic given by $ \sigma (\delta) = \set{ \delta , \pi
+\bar{s}-\delta_0-\delta} $ and the vertical plane $T(\delta) = \sigma (\delta)
\times \r$. Let us denote by $\tilde{S} _2=\bigcup _{0\leq s \leq \bar{s}-2\delta
_0} \alpha (s) \subset S$ and note that $\tilde{S}_2$ is connected and embedded, so,
$\tilde{S}=\tilde{S}_1 \cup \tilde{S}_2 \subset S$ is connected and embedded. For
each $\delta$, $ 0 < \delta \leq \delta _0 $, $E ( \delta ) = T( \delta ) \cap
\tilde{S}$ is a strictly convex compact embedded curve in $T(\delta)$. As $\delta
\rightarrow 0$, theses curves converge to a convex curve in $T( 0 )$ with $\partial
_{\infty} \pi ( E( 0 ) )$ the two points $\{ 0 , \pi - \delta _0 +\bar{s}\}$. This
contradicts Claim 2. Hence $\alpha (s )$ can not become non-compact and we are in
the Case B.1.
\end{enumerate}
\end{enumerate}
Finally, $S$ bounds a strictly convex body follows from Proposition \ref{p3.1}, and
the fact that every geodesic in $\hr$ lies in a vertical plane. This completes the
proof of Theorem \ref{t3.1}.
\end{proof}

\section{Complete surfaces in H$^2$xR with K$>$0 and a simple end.}

This Section is devoted to the construction of some examples of complete embedded
surfaces with positive extrinsic curvature and a simple end.

Note that if $S$ has a  simple end, then for each $t \in \r$ where $S \cap (\h ^2
\times \set{t})$ is not compact, the intersection is a convex curve in $\h ^2 \times
\set{t}$ with asymptotic boundary one point. Also, the point at infinity of each
horizontal section is the same, that is, $\bi \pi(S \cap \h ^2 \times \set{t}) =
\set{\bar{\theta}} \in \si $.

Bearing this in mind, we fix a point $\bar{\theta} \in \si$, say
$\bar{\theta}=0\equiv(1,0)$, and consider the 1-parametric isometry group given by
$$
F_t(x,y,z)=\left(1+\frac{4(x-1)}{4+4ty+t^2((x-1)^2+y^2)},
\frac{4y+2t((x-1)^2+y^2)}{4+4ty+t^2((x-1)^2+y^2)},z\right).
$$
Here, the orbit of any point $p\in\h^2\times\r$ is a horocycle $H_p$ contained in a
slice such that $\partial_{\infty}\pi(H_p)=\{\bar{\theta}\}.$

Let $P$  be a plane orthogonal to every orbit, say $P=\{(x,y,z)\in\hr: y=0\}$. We
parametrize $P$ as
$$
\psi(x,y)=\left(\frac{e^x-1}{e^x+1},0,y\right)
$$
in such a way that its induced metric is $dx^2+dy^2$.

Now, let $\alpha(y)=\psi(\rho(y),y)$ be a curve on the vertical plane $P$ for a
suitable function $\rho$ and consider the helicoidal surface $S_{\rho}$ given by
$f_{\rho}(y,t)=F_t(\alpha(y))$.

Then, it is easy to check that the extrinsic curvature of $S_{\rho}$ is
$$
K=\frac{\rho''(y)}{(1+\rho'(y)^2)^2}.
$$

Thus, we obtain
\begin{pro}
Let  $\mathcal{I}= (y_1,y_2)$ be an open interval where $-\infty \leq y_1 < y_2 \leq
+\infty$, and $\rho : \mathcal{I} \longrightarrow \r $ a function such that $\rho ''
(y) > 0 $ for all $y \in \mathcal{I}$, $\lim _{y \rightarrow y_1} \rho (y)=\lim _{y
\rightarrow y_2} \rho (y) =+\infty$. Then the surface $S_{\rho}$  is a properly
embedded surface with positive extrinsic curvature and a simple end.
\end{pro}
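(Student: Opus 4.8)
The plan is to verify the three assertions---positive extrinsic curvature, proper embeddedness, and the simple end---separately, exploiting that $S_\rho$ is the orbit of the convex profile curve $\alpha$ under the parabolic group $\{F_t\}$ fixing $\bar\theta$.

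Positive extrinsic curvature is immediate from the formula already computed above: $K=\rho''(y)/(1+\rho'(y)^2)^2$, and since $\rho''>0$ on $\mathcal{I}$ while the denominator is positive, $K>0$ everywhere. In particular the second fundamental form is nondegenerate, so $f_\rho$ is an immersion; one can also see this directly, since $\partial_t f_\rho$ is tangent to the horocycle $\{F_t(\alpha(y))\}$ and stays in a slice, whereas $\partial_y f_\rho$ has $\pt$-component equal to $1$ (the vertical coordinate of $\alpha(y)$ is $y$, and each $F_t$ preserves slices), so the two partials are linearly independent.

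Next I would establish injectivity and properness, which together give a proper embedding. For injectivity, the vertical coordinate of $f_\rho(y,t)$ equals $y$, so $f_\rho(y,t)=f_\rho(y',t')$ forces $y=y'$, and then $F_{t-t'}$ fixes the point $\alpha(y)$, impossible for a nontrivial element of the parabolic group (which acts freely on $\h^2$); hence $t=t'$. For properness I would show that if $f_\rho(y_n,t_n)$ lies in a fixed compact set $C\subset\hr$, then $(y_n,t_n)$ cannot escape $\mathcal{I}\times\r$. The vertical coordinate forces $y_n$ bounded; and if $y_n\to y_1$ or $y_2$ then $\rho(y_n)\to+\infty$, so $\alpha(y_n)$ tends to $\bar\theta$ and the horocycle through it collapses into an arbitrarily small Euclidean neighborhood of $\bar\theta$, forcing $F_{t_n}(\alpha(y_n))$ out of every horizontal compact set, a contradiction. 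Thus the $y_n$ stay in a compact subinterval $[a,b]\subset\mathcal{I}$, so $\alpha(y_n)$ remains in a compact $K_0\subset\h^2$; since the parabolic flow is proper (as $|t|\to\infty$, $F_t$ pushes $K_0$ entirely toward $\bar\theta$), the $t_n$ are bounded as well. So $f_\rho$ is a proper injective immersion, i.e.\ a proper embedding of the connected surface $\mathcal{I}\times\r$.

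Finally I would check the simple end. The projection $\pi(S_\rho)=\bigcup_{y\in\mathcal{I}} h_{\rho(y)}$ is a union of horocycles centered at $\bar\theta$; since $\rho$ is convex with limit $+\infty$ at both endpoints, its range is $[\rho_{\min},+\infty)$, so $\pi(S_\rho)$ is exactly the horodisk bounded by $h_{\rho_{\min}}$, whose asymptotic boundary is the single point $\bi\pi(S_\rho)=\{\bar\theta\}$. For the second requirement I fix $\theta_1,\theta_2\in\si\setminus\{\bar\theta\}$ and $\gamma=\gamma(\theta_1,\theta_2)$; because its endpoints avoid $\bar\theta$, the geodesic $\gamma$ penetrates only finitely far toward $\bar\theta$, so there is a threshold $\rho_*$ with $\gamma\cap h_\rho\neq\emptyset$ (at most two points) exactly when $\rho\le\rho_*$. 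A point $F_t(\alpha(y))$ lies in $\gamma\times\r$ only if $h_{\rho(y)}\cap\gamma\neq\emptyset$, hence $\rho(y)\le\rho_*$, confining $y$ to the compact sublevel set $\{\rho\le\rho_*\}\subset\mathcal{I}$ and, for each such $y$, $t$ to at most two bounded values. Therefore $(\gamma\times\r)\cap S_\rho$ is bounded and closed, hence compact (or empty), and $S_\rho$ has a simple end at $\bar\theta$. The one genuinely delicate point is the behavior as $y\to y_1,y_2$: one must confirm that as the profile runs off toward $\bar\theta$ the horizontal horocyclic sections collapse onto $\bar\theta$, which is precisely what simultaneously yields properness and pins $\bi\pi(S_\rho)$ to the single point $\bar\theta$; the curvature and injectivity statements are essentially formal.
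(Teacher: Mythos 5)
Your proposal is correct and follows the only natural route: the paper itself offers no proof beyond the displayed curvature formula $K=\rho''(y)/(1+\rho'(y)^2)^2$, treating embeddedness, properness and the simple end as immediate from the construction, and your verification of exactly those points (heights give injectivity, collapse of the horocycles $h_{\rho(y)}$ toward $\bar\theta$ gives properness and $\bi\pi(S_\rho)=\{\bar\theta\}$, and the sublevel sets $\{\rho\le\rho_*\}$ give compactness of the sections by planes $\gamma(\theta_1,\theta_2)\times\r$) is sound. In short, you have correctly filled in the routine details the paper leaves implicit, with no gap.
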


\section{Complete revolution surfaces of constant positive extrinsic curvature.}

In this section we focus our attention on the study of the complete revolution
surfaces of positive constant extrinsic curvature in $\hr$.

Let us consider the Lorentz-Minkowski 4-space $\l^4$ with induced metric
$-dx_1^2+dx_2^2+dx_3^2+dx_4^2$. Here, we consider $\hr$ as the submanifold of $\l^4$
given by
$$
\hr=\{(x_1,x_2,x_3,x_4)\in \l^4: -x_1^2+x_2^2+x_3^2=-1,\ x_1>0\}.
$$

It is well known that the special orthogonal group $\s\o(2)$ can be identified with
the subgroup of isometries of $\hr$ (rotations) which preserves the orientation and
fixes an axis $\{p\}\times\r$, with $p\in\hc$.

Up to isometries, we can assume that the axis is given by $\{(1,0,0)\}\times\r$. In
such a case, the set ${\cal P}=\{(x_1, x_2, x_3,x_4)\in\hr :\ x_2\geq 0,\,x_3=0\}$
intersects every $\s\o(2)$-orbit once. Thus, every revolution surface with that axis
can be obtained under the $\s\o(2)$-action of a curve on ${\cal P}$.

Let $\alpha(t)=(\cosh k(t),\sinh k(t),0,h(t))\subset{\cal P}$, where $k(t)\geq 0$
and $t$ the arc length of $\alpha$, that is, $k'(t)^2+h'(t)^2=1$. If $\alpha(t)$
generates a complete surface with positive extrinsic curvature then, from
Proposition \ref{p3.1}, this revolution surface $S$ must be topologically a sphere
or a graph on an open domain in $\h^2$ and, so, $\alpha(t)$ intersects the axis at
least once. In fact, the curve intersects the axis orthogonally; otherwise the
revolution surface would not be smooth.  But, from the vertical height estimates of
Section 6, there is no complete revolution $K-$surface with one end.

Now, $S$ can be parametrized by
$$
\psi(t,v)=(\cosh k(t),\sinh k(t)\cos v,\sinh k(t)\sin v,h(t)).
$$

A straightforward computation shows that the principal curvatures of $S$ are given by
\begin{eqnarray}
\lambda _1 &=& k' h'' - k'' h' \label{l1rev}\\
\lambda _2 &=& h' \coth k \label{l2rev}
\end{eqnarray}and so, its extrinsic curvature is
$$ K = h' \coth k ( k' h'' - k'' h' ) $$where $'$ denotes the derivative with respect to $t$.
Since $k'^2+h'^2=1$, we have $k' k'' + h' h'' =0$ and so
\begin{equation}\label{Krev1}
K = - k'' \coth k.
\end{equation}

Let us assume that $K$ is a positive constant, then
$$ (k')^2 = C_1 - 2 K \ln \cosh k $$where $C_1$ is a constant to be determined by the
boundary conditions.

As $\alpha$ must cut the axis orthogonally, we can assume that the lowest point
occurs at $t=0$.  Then $k(0)=0$ and $k'(0) =1 $, thus $C_1 =1$, that is
\begin{equation}\label{Krev2}
(k')^2 = 1 - 2 K \ln \cosh k .
\end{equation}

Since the lowest point occurs at $t=0$, $h' (t)> 0$ and hence, by (\ref{l2rev}),
$\lambda _2 (t)>0$ for sufficiently small $t$. But $\lambda _2 $ must have the same
sign, so $h'(t)>0$ for all $t$. Hence, $h$ increases as $t$ increases (and $S$ must
be embedded).

Suppose $z$ is the highest point of $S$, then $k (t_z) =0 $ for some $t_z >0 $.
Hence the domain of $t$ is $[0, t_z]$ and
$$
k(0)=k(t_z)=0,\quad h'(0)=h'(t_z)=0,\quad k'(0)=1,\quad k'(t_z)=-1.
$$
On the other
hand, by (\ref{Krev1}), $k'' < 0$ which implies that $k'$ decreases from $k'(0)=1$
to $k'(t_z)=-1$. So, as $t$ increases from $0$ to $t_z$, $k$ first increases from
$0$ to $k_{max}=k(t_{max})$, for some $t_{max} \in [0,t_z]$, then decreases from
$k_{max}$ to $0$.

Thus, $k$ must increase from $0$ to $t_{max}$ and $k'(t_{max})=0$. So, from
(\ref{Krev2}), one obtains $ k(t_{max}) = \cosh ^{-1} \exp \left( 1/2K\right) $.

Now, let $u= k' $, where $-1 \leq u \leq 1$, then by equation (\ref{Krev2}) we have
\begin{equation}\label{Krev3}
k = \cosh ^{-1} \exp \left( \frac{1-u^2}{2 K}\right)  , \quad -1 \leq u \leq 1 .
\end{equation}

Since $u= k'$, $\frac{d u}{ dt} = k'' = - K \tanh k$, we have by (\ref{Krev3})
$$
\frac{d h}{du} = -\frac{1}{K}\frac{\sqrt{1-u^2}}{\sqrt{1-\exp\left( -\frac{1-u^2}{K}
\right)}}.
$$

Then
\begin{equation}\label{hrev}
h = -\frac{1}{K}\int _1 ^{u}\frac{\sqrt{1-u^2}}{\sqrt{1-\exp\left( -\frac{1-u^2}{K}
\right)}} \,du + C , \quad -1 \leq u \leq 1
\end{equation}where $C$ is a real constant.

Also, we have
\begin{eqnarray*}
h_{min } &=& C\\
h_{max}  &=& \frac{1}{K}\int _{-1} ^{1}\frac{\sqrt{1-u^2}}{\sqrt{1-\exp\left(
-\frac{1-u^2}{K} \right)}} \,du + C
\end{eqnarray*}and
$$ h_0 = \frac{1}{K}\int _{-1} ^{0}\frac{\sqrt{1-u^2}}{\sqrt{1-\exp\left( -\frac{1-u^2}{K} \right)}} \,du + C $$

Therefore, from (\ref{Krev3}) and (\ref{hrev}), as $u$ decreases from $1$ to $-1$, $h$ increases from
$h_{min}$ to $h_{max}$, and $k$ first increases from $0$ to $k_{max}$ then decreases from $k_{max}$ to $0$.
Also, $S$ must be symmetric about $\h ^2 \times \set{h_0}$. Thus,

\begin{pro}\label{revh2r}
Let $S$ be a complete immersed sphere of revolution (about the vertical line
$\set{1,0,0}\times \r$) in $\hr$ with constant $K >0$, given by
$$\psi_K (u,v)=(\cosh k(u),\sinh k(u)\cos v,\sinh k(u)\sin v,h(u))$$where
$\alpha(u)=(\cosh k(u),\sinh k(u),0,h(u))$ is the generating curve of $S$.

Then, $S$ must be embedded and the generating curve is given by
\begin{eqnarray}
k(u) &=& \cosh ^{-1} \exp \left( \frac{1-u^2}{2 K}\right) \label{krev2}\\
h(u)&=& -\frac{1}{K}\int _1 ^{u}\frac{\sqrt{1-u^2}}{\sqrt{1-\exp\left( -\frac{1-u^2}{K} \right)}}
 \,du + C \label{hrev2}
\end{eqnarray}where $-1 \leq u \leq 1$ and $C$ is a real constant.

Also, $\h ^2 \times \set{h_0}$, with
$$ h_0 = \frac{1}{K}\int _{-1} ^{0}\frac{\sqrt{1-u^2}}{\sqrt{1-\exp\left( -\frac{1-u^2}{K} \right)}}
 \,du + C ,$$divides $S$ into two (upper and lower) symmetric parts.
\end{pro}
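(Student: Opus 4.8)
The plan is to reduce everything to a single ordinary differential equation for the profile function $k$, integrate it once by exploiting the arc-length normalization, and then read off the explicit generating curve and its symmetry via a monotone change of variable. Since $S$ is a complete revolution sphere with $K>0$, Proposition \ref{p3.1} forces the generating curve $\alpha(t)=(\cosh k(t),\sinh k(t),0,h(t))$ to meet the rotation axis, and smoothness of $S$ forces that meeting to be orthogonal; placing the lowest point at $t=0$ fixes the initial data $k(0)=0$, $k'(0)=1$. With arc length $t$, so that $k'^2+h'^2=1$, the whole problem collapses to the study of the scalar equation governing $k$.

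First I would compute the principal curvatures (\ref{l1rev}) and (\ref{l2rev}) directly from the parametrization in $\l^4$. Differentiating the relation $k'^2+h'^2=1$ gives $k'k''+h'h''=0$, and this is exactly what turns $K=h'\coth k\,(k'h''-k''h')$ into the clean form $K=-k''\coth k$ of (\ref{Krev1}). This algebraic simplification is the heart of the reduction; once in hand, a single integration against $k'$ yields $(k')^2=C_1-2K\ln\cosh k$, and the initial conditions give $C_1=1$, namely (\ref{Krev2}).

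The key qualitative step is the sign-and-monotonicity argument that delivers embeddedness. Because $K>0$ the second fundamental form is definite, so $\lambda_2=h'\coth k$ cannot vanish for $k>0$ and hence keeps a fixed sign; since $h'>0$ just above $t=0$, one obtains $h'>0$ throughout, $h$ strictly increasing, and $S$ embedded. Meanwhile $k''=-K\tanh k<0$ shows that $k'$ decreases strictly from $1$ to $-1$, so $u=k'$ is a genuine reparametrization of the profile. Substituting $u$ into (\ref{Krev2}) gives the closed form (\ref{krev2}) for $k(u)$, and computing $dh/du=h'/k''=\sqrt{1-u^2}/(-K\tanh k)$ and rewriting $\tanh k$ through (\ref{krev2}) produces the integral (\ref{hrev2}) for $h(u)$. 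The symmetry about $\h^2\times\set{h_0}$ then follows at once, since the integrand in (\ref{hrev2}) depends only on $u^2$ and is therefore even, so the height gained from $u=1$ to $u=0$ equals that gained from $u=0$ to $u=-1$.

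The point I expect to require the most care is verifying that this profile actually closes up into a smooth compact sphere rather than an incomplete or singular curve: one must check that $k$ returns to $0$ at a finite parameter value with $k'=-1$, and that the improper integral defining $h_{max}$ converges despite the denominator in (\ref{hrev2}) vanishing at $u=\pm1$. The convergence is saved by the fact that near $u=\pm1$ one has $1-\exp(-(1-u^2)/K)\sim(1-u^2)/K$, so the integrand stays bounded; confirming this, together with the orthogonal closure at the axis, is what certifies that the surface is genuinely the embedded rotational sphere claimed.
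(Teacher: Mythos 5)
Your proposal is correct and follows essentially the same route as the paper's own proof: the same reduction to $K=-k''\coth k$ via $k'k''+h'h''=0$, the same first integral $(k')^2=1-2K\ln\cosh k$ with $k(0)=0$, $k'(0)=1$, the same sign argument for $h'$ from the definiteness of $II$, the substitution $u=k'$ with $dh/du=h'/k''$, and the symmetry from the evenness of the integrand. Your extra verification that the integrand remains bounded at $u=\pm1$ (it tends to $\sqrt{K}$) and that the profile closes up in finite arc length is a point the paper leaves implicit, but it is consistent with, not a departure from, the paper's argument.
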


\begin{remark}
Let us observe that the above analysis is the same, in spirit, as in \cite{CR} for the case of revolution
surfaces in $\s^2 \times \r$ with constant positive extrinsic curvature. Moreover, we will need that
result, so we will state it here:
\begin{pro}\label{revsr}
Let $S$ be a complete immersed sphere of revolution (about the vertical line
$\set{1,0,0}\times \r$) in $\s ^2 \times \r\subset\r^4$ with constant $K
>0$, given by
$$\psi_K (u,v)=(\cos k(u),\sin k(u)\cos v,\sin k(u)\sin v,h(u))$$where
$\alpha(u)=(\cos k(u),\sin k(u),0,h(u))$ is the generating curve of $S$.

Then,
\begin{enumerate}
\item[i)] $S$ must be topologically a sphere and embedded.
\item[ii)] $S$ stays in $\d \times \r$, where $\d$ denotes the open hemisphere of $\s ^2$
of center $(1,0,0)$.
\item[iii)]the generating curve is given by
\begin{eqnarray}
k(u) &=& \cos ^{-1} \exp \left( -\frac{1-u^2}{2 K}\right) \label{ks2r}\\
h(u)&=&  -\frac{1}{K}\int _1 ^{u}\frac{\sqrt{1-u^2}}{\sqrt{\exp\left( \frac{1-u^2}{K} \right) -1}}
 \,du + C \label{hs2r}
\end{eqnarray}where $-1 \leq u \leq 1$ and $C$ is a real constant.
\end{enumerate}

Also, $\d \times \set{h_0}$, where
$$ h_0 = \frac{1}{K}\int _{-1} ^{0}\frac{\sqrt{1-u^2}}{\sqrt{\exp\left( \frac{1-u^2}{K} \right)-1}}
 \,du + C ,$$divides $S$ into two (upper and lower) symmetric parts.
\end{pro}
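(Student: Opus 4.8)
The plan is to repeat the computation performed above for $\hr$ (the one culminating in Proposition \ref{revh2r}), simply replacing the hyperbolic functions $\cosh k, \sinh k$ by the spherical ones $\cos k, \sin k$. First I would parametrize the generating curve by arc length $t$, so that $k'^2 + h'^2 = 1$, and compute the second fundamental form of $\psi$ directly in $\r^4$. A unit normal to $S$ tangent to $\s^2\times\r$ is
$$
N = (h'\sin k,\, -h'\cos k\cos v,\, -h'\cos k\sin v,\, k'),
$$
and pairing $N$ against $\psi_{tt}, \psi_{tv}, \psi_{vv}$ shows that the coordinate curves are lines of curvature, with principal curvatures
\begin{eqnarray*}
\lambda_1 &=& k'h'' - k''h', \\
\lambda_2 &=& h'\cot k,
\end{eqnarray*}
the only difference from (\ref{l1rev})--(\ref{l2rev}) being $\coth k \rightsquigarrow \cot k$. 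Using $k'^2 + h'^2 = 1$, hence $k'k'' + h'h'' = 0$ and $k'h'' - k''h' = -k''/h'$, the extrinsic curvature simplifies to $K = -k''\cot k$, the spherical counterpart of (\ref{Krev1}).

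Assuming now that $K$ is a positive constant, I would integrate $k'' = -K\tan k$ by multiplying by $k'$. Since $\alpha$ meets the axis orthogonally at a lowest point, normalized to $t=0$ with $k(0)=0$ and $k'(0)=1$, the first integral reads
$$
(k')^2 = 1 + 2K\ln\cos k,
$$
the analogue of (\ref{Krev2}). Setting $u = k'$ gives $\cos k = \exp\!\big(-(1-u^2)/(2K)\big)$, that is the asserted expression (\ref{ks2r}) for $k(u)$; and from $(h')^2 = 1 - u^2$ together with $du/dt = k'' = -K\tan k$ and $\tan k = \sqrt{\exp((1-u^2)/K) - 1}$ one recovers (\ref{hs2r}). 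This gives (iii), and evaluating (\ref{hs2r}) at $u=0$ (where $k' = 0$, i.e. $k$ is maximal) yields $h_0$ and the symmetry of $S$ about the slice $\d\times\set{h_0}$, exactly as for Proposition \ref{revh2r}.

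The sign argument for (i) is identical to the $\hr$ case: $\lambda_2 = h'\cot k$ is positive for small $t>0$ and cannot change sign on a surface with $K>0$, so $h'>0$ throughout, whence $S$ is a graph in the $t$-direction and embedded; since $k'' = -K\tan k < 0$, the function $k$ rises from $0$ to a maximum and falls back to $0$, so $S$ closes up as a topological sphere. The genuinely new point, which I expect to be the main subtlety, is the hemisphere confinement (ii). Here one reads off from $(k')^2 = 1 + 2K\ln\cos k \geq 0$ the bound $\cos k \geq \exp(-1/(2K)) > 0$, so that $k \leq k_{max} = \cos^{-1}\exp(-1/(2K)) < \pi/2$ for every $t$. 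As the $\s^2$-component of $\psi$ equals $(\cos k, \sin k\cos v, \sin k\sin v)$ and $\cos k > 0$ characterizes the open hemisphere $\d$ centered at $(1,0,0)$, it follows that $S \subset \d\times\r$, completing (ii).
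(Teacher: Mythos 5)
Your proposal is correct and is essentially the paper's intended argument: the paper states this proposition without proof, citing \cite{CR} and remarking that ``the above analysis is the same, in spirit'' as its $\hr$ computation for Proposition \ref{revh2r}, and your derivation is exactly that computation transplanted to $\s^2\times\r$ (principal curvatures $\lambda_1=k'h''-k''h'$ and $\lambda_2=h'\cot k$, the first integral $(k')^2=1+2K\ln\cos k$, the substitution $u=k'$, and the evenness of the integrand giving the symmetric slice $\d\times\set{h_0}$). Your observation that $(k')^2\geq 0$ forces $\cos k\geq e^{-1/(2K)}>0$, hence $k<\pi/2$, correctly supplies the one point, part (ii), that has no hyperbolic counterpart and also justifies the sign of $\cot k$ in the embeddedness argument for (i).
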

\end{remark}

\section{Vertical and horizontal height estimates for $K-$surfaces.}

We divide this Section in three parts. First, we establish some necessary equations
for surfaces with positive extrinsic curvature in $\m \times \r $, $\m$ a Riemannian
surface. Second, we obtain vertical height estimates for compact embedded surfaces
with constant positive extrinsic curvature in $\m\times \r $ and boundary in a
slice. And finally, we give horizontal height estimates for $K-$surfaces in $\hr$
and boundary on a vertical plane.

\subsection{Necessary equations.}

We will work in the spirit of \cite{AEG1} but using the conformal structure induced by the
second fundamental form of the surface as in \cite{AEG2}.

Let us denote by $g$ the metric of $\m $. Then the metric of $\m \times \r$ is given
by $\meta{}{}=g_{\kappa }+dt^2$. Let $\psi :S \longrightarrow \m \times \r$ be an
immersion with positive extrinsic curvature $K$.

Let $\pi :\m \times \r \longrightarrow\m  $ and $\pi_{\r}:\m  \times \r
\longrightarrow \r$ be the usual projections. We denote by $h:S\longrightarrow\r$
the height function, that is, $h(z)=\pi_{\r}(\psi(z))$, and $\nu=\meta{ N}{\pt}$,
$\pt$ the gradient in $\m  \times \r$ of the function $t$.

Since $K>0$ the second fundamental form $II$ is definite (and positive definite for
a suitable normal $N$). Then, we can choose a conformal parameter $z$ such that the
fundamental forms $I$ and $II$ can be written as
\begin{equation}\label{Iparametroz}
\begin{split}
 I & =  \meta{d\psi}{d\psi} =  E dz ^2 + 2 F \abs{dz}^2 + \ec d\bar{z}^2, \\
II & =  -\meta{d\psi}{dN} =  2 \rho \abs{dz}^2,\qquad\rho>0.
\end{split}
\end{equation}

Here
\begin{equation}\label{extrinseca}
K= - \frac{\rho ^2}{D} ,
\end{equation}
with $D= \abs{E}^2 - F^2 < 0$. The mean curvature of $S$ is
\begin{equation}
H=-\frac{F\rho}{D}=\frac{K}{\rho}\ F\label{HH}.
\end{equation}

Let us write
$$
\pt=T+\nu\,N
$$
where $T$ is a tangent vector field on $S$. Since $\pt$ is the gradient in $\m
\times \r$ of the function $t$, it follows that $T$ is the gradient of $h$ on $S$.
Thus, from (\ref{Iparametroz}), one gets
\begin{equation}\label{T}
T=\frac{1}{D}(\alpha \parz + \ac \parzc ),
\end{equation}
where
\begin{equation}
\alpha = \ec \hz - F \hzb \label{alpha} .
\end{equation}

In addition, we obtain the following equations
\begin{eqnarray}
\langle T,T\rangle &=& \frac{1}{D}( \alpha  h_z+ \ac \hzb) \label{TT} \\
\langle T,T\rangle &=& \frac{1}{D}(E\hzb^2+\bar{E}h_z^2-2F|h_z|^2)\label{TT2}\\
h_z &=& \frac{1}{D}( E \alpha + F \ac ) \label{hz}\\
|h_z|^2&=&||T||^2F+\frac{|\alpha|^2}{D}\label{hzhw}.
\end{eqnarray}

\begin{lema}\label{l6.1}
Let $\psi : S \longrightarrow \m \times \r $ be an immersion with $K>0$. Then, for a
conformal parameter $z$ for the second fundamental form, the following equations are
satisfied:
\begin{eqnarray}
\text{Codazzi} & \mbox{} & \frac{\rho _{\bar{z}}}{\rho} + ( \Gamma _{12}^1 -\Gamma
_{22}^2 ) = \kappa  \alpha \frac{\nu}{\rho} \label{codazzi}\\
 & \mbox{} & h_{zz} =\Gamma_{11}^1 \hz + \Gamma _{11}^2 \hzb \label{hzz}\\
& \mbox{} & h_{z\bar{z}} =\Gamma_{12}^1 \hz + \Gamma _{12}^2 \hzb + \nu \rho
 \label{hzw0} \\
 & \mbox{} & \nu _{\bar{z}}=  \frac{\alpha K }{\rho} \label{nuzb}
\\
& \mbox{} & ||T||^2+\nu^2=1.\label{nu2}
\end{eqnarray}

Here $\kappa (p)$ stands for the Gauss curvature of $\m$ at $\pi(\psi(p))$, and
$\Gamma^{k}_{ij}$, $\ i,j,k=1,2$, are the Christoffel symbols associated to $z$.
\end{lema}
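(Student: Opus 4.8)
The plan is to derive each of the five equations from the fundamental structure equations of the immersion, using the conformal parameter $z$ adapted to $II$. The starting point is the standard Gauss-Weingarten formulas written in the frame $\{\psi_z,\psi_{\bar z},N\}$, together with the compatibility (Codazzi-Mainardi) equations of the ambient space $\m\times\r$. Since the ambient is a product, its curvature tensor is explicit: only the sectional curvature along the horizontal $\m$-directions is nonzero and equals $\kappa$, while any plane containing $\pt$ is flat. This is the feature that makes the term $\kappa\,\alpha\,\nu/\rho$ in (\ref{codazzi}) appear, rather than a constant-curvature expression.

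First I would write the Gauss formula $\psi_{zz}=\Gamma_{11}^1\psi_z+\Gamma_{11}^2\psi_{\bar z}+E\,N$-type relations in our normalization. With $II=2\rho|dz|^2$, the normal components of $\psi_{zz}$ and $\psi_{z\bar z}$ are controlled by the second fundamental form: $\meta{\psi_{z\bar z}}{N}=\rho$ and $\meta{\psi_{zz}}{N}=0$. Applying $\pi_\r$ (or equivalently taking the inner product with $\pt$) to the Gauss decompositions of $\psi_{zz}$ and $\psi_{z\bar z}$, and using $h=\pi_\r\circ\psi$ together with $\meta{\psi_z}{\pt}=h_z$ and $\meta{N}{\pt}=\nu$, yields (\ref{hzz}) and (\ref{hzw0}) directly: the tangential Christoffel terms reproduce $\Gamma_{11}^k h_{\cdot}$ and $\Gamma_{12}^k h_{\cdot}$, while the normal part contributes $0$ in the first case and $\nu\rho$ in the second. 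Equation (\ref{nu2}) is immediate from $\pt=T+\nu N$ with $\pt$ a unit field and $T\perp N$, so $\norm{T}^2+\nu^2=\meta{\pt}{\pt}=1$.

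The Weingarten equations give the derivatives of $N$; writing $N_{\bar z}$ in the tangent basis and taking $\meta{\cdot}{\pt}$ produces $\nu_{\bar z}=\meta{N_{\bar z}}{\pt}$. Since $N_{\bar z}$ is tangent, $\meta{N_{\bar z}}{\pt}=\meta{N_{\bar z}}{T}$, and expressing $N_{\bar z}$ through the shape operator relative to $II=2\rho|dz|^2$ together with the formula (\ref{T}) for $T$ and the relation (\ref{extrinseca}) for $K$ collapses the expression to $\alpha K/\rho$, giving (\ref{nuzb}). The one equation requiring genuine care is Codazzi, (\ref{codazzi}): I would start from the ambient Codazzi identity $(\nabla_X II)(Y,Z)-(\nabla_Y II)(X,Z)=\meta{\bar R(X,Y)Z}{N}$, insert $X=\psi_z$, $Y=\psi_{\bar z}$, and evaluate the right-hand side using the product structure of $\bar R$. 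The main obstacle is precisely this curvature term: one must check that $\meta{\bar R(\psi_z,\psi_{\bar z})\psi_z}{N}$ reduces, via the product-space curvature formula and the decomposition $\pt=T+\nu N$, to a multiple of $\kappa\,\alpha\,\nu$. The left-hand side, after dividing by $\rho$ and using $II=2\rho|dz|^2$, contributes $\rho_{\bar z}/\rho$ plus the Christoffel combination $\Gamma_{12}^1-\Gamma_{22}^2$; matching the two sides gives the stated Codazzi equation. I expect the bookkeeping of the curvature term—tracking which components of $\bar R$ survive in a product and assembling them into $\kappa\,\alpha\,\nu/\rho$—to be the only nonroutine point, with all remaining identities following from direct substitution of (\ref{T})–(\ref{hzhw}).
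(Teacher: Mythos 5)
Your proposal is correct and matches the paper's proof essentially step for step: the paper likewise writes the Gauss--Weingarten formulas in the frame $\{\partial_z,\partial_{\bar z},N\}$ (with $\meta{\nabla_{\partial_z}\partial_z}{N}=0$ and $\meta{\nabla_{\partial_z}\partial_{\bar z}}{N}=\rho$ since $z$ is conformal for $II$), pairs each with the parallel field $\pt$ to obtain (\ref{hzz}), (\ref{hzw0}) and (\ref{nuzb}), gets (\ref{nu2}) from $1=\meta{\pt}{\pt}=\norm{T}^2+\nu^2$, and proves (\ref{codazzi}) by computing the normal component of the curvature commutator and evaluating it via the product-manifold curvature formula (citing O'Neill), exactly as you outline. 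The only cosmetic difference is that the paper reads (\ref{nuzb}) off by pairing the Weingarten equation directly with $\pt$, so $\alpha=\ec\hz-F\hzb$ appears immediately from (\ref{alpha}) without your detour through $T$ and (\ref{T}).
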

\begin{proof}
From (\ref{Iparametroz}) we have
\begin{eqnarray}
\nabla_{\parz} \parz &=& \Gamma^1_{11}\parz+\Gamma^2_{11}\parzc \nonumber\\[2mm]
\nabla_{\parz} \parzc &=& \Gamma^1_{12}\parz+\Gamma^2_{12}\parzc +\rho \,N\label{dos}\\
-\nabla_{\parzc} N&=& \frac{K}{\rho} \left(\bar{E}\parz-F \,\parzc \right).\nonumber
\end{eqnarray}

Thus, the scalar product of these equalities with
$\pt$ gives us (\ref{hzz}), (\ref{hzw0}) and (\ref{nuzb}), respectively.

The last equation follows from
$$ 1=\langle\pt,\pt\rangle=\langle T,T\rangle+\nu^2 $$

Finally, from (\ref{dos}) we get
$$
\langle\nabla_{\parzc}\nabla_{\parz}\ \parz-\nabla_{\parz}\nabla_{\parzc}\ \parz ,N\rangle\ =\
\rho _{\bar{z}} + \rho ( \Gamma _{12}^1 -\Gamma _{22}^2 )
$$

Hence, using the relationship between the curvature tensors of a product manifold
(see, for instance, \cite[p. 210]{O}), the Codazzi equation becomes
$$ \kappa  \alpha \frac{\nu}{\rho} =\
\frac{\rho _{\bar{z}}}{\rho} + ( \Gamma _{12}^1 -\Gamma _{22}^2 ). $$

That is, (\ref{codazzi}) holds.
\end{proof}

\begin{remark}
The equation (\ref{nu2}) will be used subsequently  without comment.
\end{remark}

From now on we will assume that $K$ is a positive constant on $S$. A straightforward
computation gives us
\begin{equation}\label{Dzb}
\Gamma _{12}^1 + \Gamma _{22}^2 = \frac{D_{\bar{z}}}{2 D}.
\end{equation}

Thus, from (\ref{codazzi})
$$ \frac{D_{\bar{z}}}{2D} -\frac{\rho _{\bar{z}}}{\rho} = 2 \Gamma _{12}^1 -\kappa \alpha \frac{\nu
}{\rho}.
$$

Since $K$ is constant, we obtain from (\ref{extrinseca})
\begin{equation}\label{titirititi}
\frac{D_{\bar{z}}}{2D} -\frac{\rho _{\bar{z}}}{\rho} = 0
\end{equation}
and
\begin{equation}\label{g121}
\Gamma _{12}^1 = \kappa \alpha \frac{\nu }{2\rho}.
\end{equation}

Using $\Gamma _{12}^2=\overline{\Gamma _{12}^1}$, (\ref{g121}), (\ref{hz}) and
(\ref{extrinseca}), one has
\begin{eqnarray}
E_{\bar{z}} &= &2\meta{\nabla_{\partial_{\bar{z}}}\partial_{z}}{\partial_{z}}= 2(E \Gamma _{12}^1 + F \Gamma_{12}^2) \nonumber\\
  &=&\kappa \frac{\nu }{\rho}( E\alpha + F \ac )=\kappa \frac{\nu D}{\rho}h_z = -\kappa \frac{\nu \rho}{K}h_z
  \label{phi}.
\end{eqnarray}
On the other hand, by using (\ref{hzw0}), (\ref{g121}), (\ref{TT}) and
(\ref{extrinseca})
\begin{eqnarray}
2 h_{z\zb} &= &\kappa \frac{\nu}{\rho}(\alpha h_z+\ac \hzb)+2\nu\rho=\kappa
\frac{\nu}{\rho}D||T||^2+2\nu\rho=-\kappa \frac{\nu\rho}{K}(1-\nu^2)+2\nu\rho
\nonumber \\
&=& \frac{\nu\rho}{K}(2K-\kappa(1-\nu^2))\label{hzw}. 
\end{eqnarray}
Now, we compute $\nu_{z \zb}$. From (\ref{nuzb}) and (\ref{titirititi})
\begin{equation}\label{nzzb0}
 \nu _{z \zb}=\alpha _z\frac{K}{\rho} - \alpha K\frac{\rho _z}{\rho ^2}=
 \alpha _z\frac{K}{\rho}-\frac{\alpha K}{\rho}\frac{D_z}{2D}.
\end{equation}

Hence, we need to compute $\alpha _z$ :
$$ \alpha _z = 2 \meta{\nabla_{\partial_{z}}\partial_{\bar{z}}}{\partial_{\bar{z}}}\hz +
\ec h_{zz}-\meta{\nabla_{\partial_{z}}\partial_{z}}{\partial_{\bar{z}}}\hzb
-\meta{\partial_{z}}{\nabla_{\partial_{z}}\partial_{\bar{z}}}\hzb - F h_{z\zb}
$$
where we have used (\ref{alpha}).

If one considers (\ref{hzw0}) and (\ref{hzz}) then
\begin{eqnarray*}
2 \meta{\nabla_{\partial_{z}}\partial_{\bar{z}}}{\partial_{\bar{z}}}\hz  &=& 2 h_z F
\Gamma _{12}^1 + 2 \hz \ec \Gamma
_{12}^2 \\
\ec h_{zz} &=& h_z \ec \Gamma _{11}^1 +  \hzb \ec \Gamma _{11}^2\\
-\meta{\nabla_{\partial_{z}}\partial_{z}}{\partial_{\bar{z}}}\hzb &=& -\hzb F \Gamma _{11}^1 -  \hzb \ec \Gamma _{11}^2\\
-\meta{\partial_{z}}{\nabla_{\partial_{z}}\partial_{\bar{z}}}\hzb &=& -\hzb E \Gamma _{12}^1 -  \hzb F \Gamma _{12}^2\\
- F h_{z\zb} &=& -h_z F \Gamma _{12}^1 -  \hzb F \Gamma _{12}^2 - \nu \rho F.
\end{eqnarray*}

Therefore,
\begin{equation*}
\begin{split}
\alpha _z &= \alpha (\Gamma _{12}^2 + \Gamma _{11}^1) + \alpha \Gamma _{12}^2 - \ac
\Gamma_{12}^1 - \nu \rho F \\
 &= \alpha \frac{D_z}{2D} - \nu \rho F
\end{split}
\end{equation*}
where (\ref{Dzb}) is used  and that $\ac \Gamma_{12}^1$ is a real function from
(\ref{g121}).

Finally, using (\ref{nzzb0}) and (\ref{HH})
\begin{equation}\label{nzzb}
\nu _{z \zb} = -K F \nu=-\rho H\nu.
\end{equation}

Hence, we have obtained the Laplacian of $h$, $\nu$ and the derivative of the
$(2,0)-$part of $I$ with respect to $II$. That is,
\begin{lema}
Let $\psi : S \longrightarrow \m \times \r$ be an immersion with constant positive
extrinsic curvature $K$ on $S$, then
\begin{eqnarray*}
\Delta^{II}h &=& (2K-\kappa(1-\nu^2))\frac{\nu}{K}\\
\Delta ^{II} \nu  &=&-2H \nu\\
E_{\zb} &=& -\kappa \frac{\nu \rho}{K}h_z.
\end{eqnarray*}
\end{lema}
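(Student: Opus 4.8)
The plan is to recognize the three identities as the intrinsic (Laplace--Beltrami) repackaging of the pointwise computations carried out just above the statement, so that the real content already sits in equations (\ref{phi}), (\ref{hzw}) and (\ref{nzzb}). The single observation that unlocks everything is the expression for the Laplacian associated to the second fundamental form: since $II=2\rho\,\abs{dz}^2$ is a conformal metric with conformal factor $2\rho$, its Laplace--Beltrami operator in the parameter $z$ is
$$
\Delta^{II}=\frac{2}{\rho}\,\parz\parzc .
$$
Everything then reduces to dividing the mixed second derivatives $h_{z\zb}$ and $\nu_{z\zb}$ by $\rho/2$.

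Concretely, I would first record the third identity, which is nothing but (\ref{phi}): using $\Gamma_{12}^2=\overline{\Gamma_{12}^1}$ together with (\ref{g121}) — which encodes the constancy of $K$ via (\ref{titirititi}) — and the expressions (\ref{hz}) for $\hz$ and (\ref{extrinseca}) for $K$, one obtains $E_{\zb}=-\kappa\,\tfrac{\nu\rho}{K}\,\hz$. For the height function I would take (\ref{hzw}), namely $2h_{z\zb}=\tfrac{\nu\rho}{K}(2K-\kappa(1-\nu^2))$, and apply the Laplacian formula above to get $\Delta^{II}h=\tfrac{2}{\rho}h_{z\zb}=\tfrac{\nu}{K}(2K-\kappa(1-\nu^2))$. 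For the angle function $\nu$ I would feed (\ref{nzzb}), $\nu_{z\zb}=-\rho H\nu$, into $\Delta^{II}\nu=\tfrac{2}{\rho}\nu_{z\zb}$, which yields $\Delta^{II}\nu=-2H\nu$. Thus all three formulas drop out once the Laplacian is written in the $II$-conformal parameter.

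The genuine work is therefore upstream of the statement, in the derivations of (\ref{hzw}) and (\ref{nzzb}), and within those the main obstacle is the computation of $\alpha_z$. There one must carefully assemble the five terms produced by (\ref{hzz}) and (\ref{hzw0}), use that $\ac\Gamma_{12}^1$ is real by (\ref{g121}), and invoke (\ref{Dzb}) to collapse the Christoffel combination into $\alpha\,D_z/(2D)-\nu\rho F$. The constancy of $K$, through (\ref{titirititi}), is precisely what forces the $\rho_z$ and $D_z$ contributions to cancel in (\ref{nzzb0}), so that $\nu_{z\zb}$ simplifies to $-KF\nu=-\rho H\nu$. Once that cancellation is secured, the Lemma follows immediately from the conformal Laplacian formula.
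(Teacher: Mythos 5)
Your proposal is correct and follows essentially the same route as the paper: the paper's proof consists precisely of the in-line derivations of (\ref{phi}), (\ref{hzw}) and (\ref{nzzb}) — including the $\alpha_z$ computation via (\ref{Dzb}), (\ref{g121}) and the cancellation forced by (\ref{titirititi}) — after which the Lemma is read off through the conformal Laplacian $\Delta^{II}=\frac{2}{\rho}\,\partial_z\partial_{\bar z}$ of $II=2\rho\abs{dz}^2$, exactly as you do. Your normalization of the conformal factor is consistent with the stated identities, so nothing is missing.
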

Now, we define a quadratic form which will play an important role in the following
sections. Let $\eps$ be a constant equal to $1$ or $-1$. Then, we consider the new
quadratic form
\begin{equation}\label{A}
A= I + g(\nu)\,dh^2
\end{equation}where $g(\nu)$ is the only solution to the ODE
$$
\phi(\nu)=\frac{\nu^2-1}{2}g'(\nu)
$$
such that it is well-defined for $\nu=\pm1$, where
\begin{eqnarray*}
\phi&=&\frac{\nu}{K}\left( (2K+\eps(\nu^2-1)) g - \eps  \right) -(1-\nu^2)g'.
\end{eqnarray*}

That is, $g(\nu)$ is the real analytic function given by
\begin{equation}\label{gdenu}
g(\nu)=\frac{\nu^2-1+\eps K(e^{\frac{\eps(1-\nu^2)}{K}}-1)}{(1-\nu^2)^2}.
\end{equation}

\begin{remark}\label{Remark1} In order to prove the above assertion observe that
the function $\widetilde{g}:\r\longrightarrow\r$
\begin{equation*}
\widetilde{g}(t)=\frac{t^2-1+\eps K(e^{\frac{\eps(1-t^2)}{K}}-1)}{(1-t^2)^2}
\end{equation*}
is well defined for each $t\in\r$ (in particular, if $t=\pm 1$) and it is an
analytic function. This is easy to see bearing in mind that
$$
e^t=1+\sum_{n=1}^{\infty}\frac{t^n}{n\mbox{!}}.
$$
\end{remark}

In addition
$$
\frac{\eps}{2K}=g(\pm 1)\leq g(\nu)\leq g(0)=-1+\eps K(e^{\frac{\eps}{K}}-1).
$$

Let us observe that
\begin{equation}\label{fi1}
\chi(\nu)=1+g(\nu)||T||^2=\frac{\eps K(e^{\frac{\eps(1-\nu^2)}{K}}-1)}{1-\nu^2}
\end{equation}
satisfies
\begin{eqnarray}
1=\chi(\pm 1)\leq \chi(\nu)&\quad\mbox{if}\quad&\eps=1\nonumber\\
0<K(1-e^{\frac{-1}{K}})=\chi(0)\leq \chi(\nu)&\mbox{if}&\eps=-1\label{fi2}
\end{eqnarray}
for all $-1\leq\nu\leq 1$.

Let us denote
\begin{equation}\label{Q}
Q = E + g(\nu)\hz ^2,
\end{equation}
then $Qdz^2$ can be considered as the $(2,0)-$part of the real quadratic form $A$
for the second fundamental form $II$.

The {\it extrinsic curvature of the pair} $(II,A)$ (see \cite{Mi}) is given by
\begin{eqnarray}
K(II,A)&=&\frac{(F + g\,|h_z|^2)^2-|Q|^2}{\rho^2}\nonumber\\
&=&\frac{(F^2-|E|^2)+g(2F|h_z|^2-E\hzb^2-\bar{E}h_z^2)}{\rho^2}\label{KIIA}\\
&=&\frac{1}{K}+\frac{g}{K}||T||^2=\frac{1}{K}(1+g||T||^2),\nonumber
\end{eqnarray}
where we have used (\ref{extrinseca}) and (\ref{TT2}).

In particular, the previous computation gives us
\begin{equation}\label{modQ}
|Q|^2=( F+g \,|h_z|^2)^2 + D(1+g\norm{T}^2).
\end{equation}

\subsection{Vertical height estimates in M$^2$xR.}

Here, we establish upper bounds for compact graphs with positive constant extrinsic
curvature and boundary in a slice of a product space $\m\times\r$.

\begin{teo}\label{testver}
Let $\m$ be a Riemannian surface, $\psi : S \longrightarrow \m \times \r$ be a
compact graph on a domain $\Omega \subset \m $, with positive constant extrinsic
curvature $K$ and whose boundary is contained in the slice $\m \times \set{0}$. Let
$k$ be the minimum of the Gauss curvature on $\Omega \subset \m$. Then, there exists
a constant $c_K $ (depend only on $K$ and $k$) such that $h(p) \leq c_K$ for all $p
\in S$.
\end{teo}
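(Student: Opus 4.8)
The plan is to use the maximum principle applied to a cleverly chosen function on the compact graph $S$. The natural candidate, following the philosophy of height estimates à la Korevaar--Kusner--Solomon and their product-space analogues in \cite{AEG1, AEG2}, is a linear combination of the height function $h$ and the angle function $\nu=\langle N,\partial_t\rangle$. Since $S$ is a graph with boundary in the slice $\m\times\{0\}$ and positive extrinsic curvature, we may orient $N$ so that $\nu>0$ on the interior, and the geometry forces $h\geq 0$. The key analytic inputs are the two Laplacian formulas established just before the statement, namely $\Delta^{II}h=(2K-\kappa(1-\nu^2))\frac{\nu}{K}$ and $\Delta^{II}\nu=-2H\nu$, where these are the Laplacians with respect to the (conformal) metric $II$. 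Here $\kappa\geq k$ on $\Omega$ by hypothesis.

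**The candidate function and the key step.**

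First I would consider the function $G=h+f(\nu)$ for a function $f$ to be determined, and compute $\Delta^{II}G$ using the two Laplacian identities together with the gradient relations for $\nu$ (equation (\ref{nuzb})) and $h$ (equations (\ref{TT}), (\ref{hz})). More economically, the quadratic form $A=I+g(\nu)\,dh^2$ and the function $\chi(\nu)=1+g(\nu)\|T\|^2$ introduced in (\ref{A})--(\ref{fi1}) have been constructed precisely so that the pair $(II,A)$ has extrinsic curvature $K(II,A)=\frac{1}{K}\chi(\nu)>0$ bounded below, as recorded in (\ref{KIIA}) and (\ref{fi2}); this is the structural reason the estimate should hold, and I expect the eventual choice of $f$ to be tied to $g(\nu)$ with the sign $\eps$ chosen according to the sign of $k$. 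The concrete step is to show that, for a suitable $f$, the function $G$ is a subsolution (or supersolution) of $\Delta^{II}$, i.e. $\Delta^{II}G\geq 0$, so that $G$ attains its maximum on $\partial S$. Evaluating the boundary bound then yields $h\leq f(\text{boundary value of }\nu)-f(\nu)\leq c_K$, with $c_K$ depending only on the admissible range of $\nu\in[-1,1]$, on $K$, and on $k$ through the lower bound $\kappa\geq k$.

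**The main obstacle.**

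The hard part will be verifying the differential inequality $\Delta^{II}G\geq 0$ uniformly, because the sign of the term $(2K-\kappa(1-\nu^2))$ in $\Delta^{II}h$ is not automatic: when $\kappa$ (hence the lower bound $k$) is large and positive, the coefficient can change sign, so the naïve choice $f=$ const fails and one genuinely needs the compensating term $\Delta^{II}f(\nu)=f'(\nu)\Delta^{II}\nu+f''(\nu)\|\nabla^{II}\nu\|^2$. Controlling $\|\nabla^{II}\nu\|^2$ in terms of $\nu$ via (\ref{nuzb}) and (\ref{extrinseca}), and then matching $f$ to the ODE defining $g(\nu)$ in (\ref{gdenu}) so that all $\kappa$-dependent terms assemble with a favorable sign, is the crux. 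I would therefore split into the cases $k\geq 0$ and $k<0$ (equivalently $\eps=1$ versus $\eps=-1$), in each case choosing $f$ as an explicit antiderivative built from $g(\nu)$, and check directly that $\Delta^{II}G\geq 0$ on all of $S$. Once the subsolution property and the boundary control are in hand, the comparison at $\partial S$ is immediate, and the constant $c_K$ emerges as the supremum over $\nu\in[-1,1]$ of the explicit function $f$, finishing the proof.
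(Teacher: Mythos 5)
Your proposal follows essentially the same route as the paper's proof: the paper applies the maximum principle to $h+f(\nu)$, with $f(\nu)=\nu/\sqrt{K}$ when $k=0$ and, after rescaling the metric so that $k=\eps=\pm 1$, with $f'(\nu)=\sqrt{\eps\bigl(1-e^{-\eps(1-\nu^2)/K}\bigr)/(1-\nu^2)}$ otherwise, verifying $\Delta^{II}(h+f(\nu))\geq 0$ exactly as you anticipate by tying $f''$ to the function $g(\nu)$ of (\ref{gdenu}) and using the definiteness of $A$ (i.e.\ $K(II,A)=\chi(\nu)/K>0$, with the sign of $F+g(\nu)|h_z|^2$ fixed at the highest point), and the constant is $c_K=\int_{-1}^0 f'(t)\,dt$. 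Two small corrections: with $II$ positive definite ($\rho>0$) a graph has $\nu\leq 0$, not $\nu>0$ (this sign is what makes the subsolution inequality and the boundary comparison $h+f(\nu)\leq 0$ work), and the correct case split is $k=0$ versus $k\neq 0$ rather than $k\geq 0$ versus $k<0$, since the normalization to $\eps=\pm 1$ breaks down precisely when $k=0$.
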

\begin{proof}
We want to compute the Laplacian of a certain function given by
$h+f(\nu)$ for a suitable real function $f$. Since, we know $h_{z\bar{z}}$
from (\ref{hzw}) then we focus our attention on $f(\nu)_{z\bar{z}}$.

By using (\ref{nuzb}), (\ref{nzzb}), (\ref{extrinseca}) and (\ref{hzhw})
\begin{eqnarray*}
f(\nu)_{z\bar{z}}&=&f'(\nu)\nu_{z\bar{z}}+f''(\nu)|\nu_{z}|^2=
-f'(\nu)KF\nu+f''(\nu)\frac{|\alpha|^2K^2}{\rho^2}\\
&=& -f'(\nu)KF\nu-f''(\nu)K(|h_z|^2-(1-\nu^2)F)\\[2mm]
&=&-K\left(F(\nu f'(\nu)-(1-\nu^2)f''(\nu))+|h_z|^2f''(\nu)\right).
\end{eqnarray*}

First, note that since $II$ must be positive definite and $S$ is a graph, then $\nu\leq 0$.
Second, we distinguish two cases, $k=0$ and $k \neq 0$. When $k=0$, we consider
$$ f(\nu ) = \frac{\nu}{\sqrt{K}} $$thus
$$
f(\nu)_{z\bar{z}} = - \sqrt{K}F \nu
$$
and, from (\ref{extrinseca}),
\begin{equation}\label{estik0}
\begin{split}
(h+f(\nu))_{z\bar{z}}& =\frac{2K-\kappa(1-\nu^2)}{2K}\nu\rho - \sqrt{K}F \nu \\
&=- \frac{\kappa}{2K} (1-\nu ^2) \nu \rho +
\left(\sqrt{-D}-F\right)\nu\sqrt{K}\\
   & \geq - \frac{\kappa}{2K} (1-\nu ^2) \nu \rho \geq 0
\end{split}
\end{equation}thus, one has $\Delta^{II}(h+f(\nu))\geq 0$ on our surface and $h+f(\nu)\leq 0$ on
the boundary, so $h \leq - \nu /\sqrt{K} \leq 1/ \sqrt{K} $.

When $k \neq 0$, we can suppose that $k = \eps$, where $\eps$ is $-1$ or $1$. To do
that it is enough to consider, the new metric on $\m \times \r$ given by the
quadratic form $|k|\,g+dt^2$ and the surface $S '=\{(x,\sqrt{|k|}\,t)\in\m \times
\r:\ (x,t)\in S \}$ which has constant extrinsic curvature $K/|k|$. Here, $g$
denotes the induced metric on $\m$.

We consider
$$
f'(\nu)=\sqrt{\eps\frac{1 -  e^{-\eps\frac{1-\nu^2}{K}}}{1-\nu^2}}.
$$
This function is real analytic and so is every primitive $f(\nu)$.

In addition
$$
f''(\nu)=\frac{\nu e^{-\eps \frac{1-\nu^2}{K}}}{K}\ \frac{g(\nu)}{f'(\nu)}
$$
where $g(\nu)$ is given by (\ref{gdenu}).

Thus,
$$
\nu f'(\nu)-(1-\nu^2)f''(\nu)=\frac{\nu \,e^{-\eps\frac{1-\nu^2}{K}}}{K\,f'(\nu)}
$$
and
$$
f(\nu)_{z\bar{z}}=-\ \frac{\nu
\,e^{-\eps\frac{1-\nu^2}{K}}}{ f'(\nu)}\,(F+g(\nu)|h_z|^2).
$$

We observe that $(F+g(\nu)|h_z|^2)|dz|^2$ is the (1,1)-part of the quadratic form
$A$, given by (\ref{A}), for the second fundamental form and our
quadratic form $Q dz^2$, given by (\ref{Q}), is the (2,0)-part of $A$ for $II$.

Moreover, from (\ref{hzw}) and (\ref{KIIA})
$$
(h+f(\nu))_{z\bar{z}}=\frac{2K-\kappa(1-\nu^2)}{2K}\nu\rho-\, \frac{\nu
\,e^{-\eps\frac{1-\nu^2}{K}}}{f'(\nu)}\,\sqrt{\frac{\rho^2(1+g(\nu)(1-\nu^2))}{K}+|Q|^2}
$$
Here, we have used that $F+g(\nu)|h_z|^2>0$. This fact is clear because $K(II,A)$ is
positive from (\ref{KIIA}), (\ref{fi1})  and (\ref{fi2}), so, $A$ is positive
definite or negative definite. Thus, $F+g(\nu)|h_z|^2$ is positive at every point or
is negative everywhere. But, it is clear that it is positive at a highest point
($h_z=0$ at this point).

Hence,
\begin{eqnarray*}
(h+f(\nu))_{z\bar{z}}&\geq& \frac{2K-\kappa(1-\nu^2)}{2K}\nu\rho-\, \frac{\nu
\,e^{-\eps\frac{1-\nu^2}{K}}}{f'(\nu)}\,\sqrt{\frac{\rho^2(1+g(\nu)(1-\nu^2))}{K}}\\
&=&\left(\frac{2K-\kappa(1-\nu^2)}{2K}-e^{-\eps\frac{1-\nu^2}{2K}}\right)\nu\rho\\
&\geq&-\left(e^{-\eps \frac{1-\nu^2}{2K}}-1+\eps
\frac{1-\nu^2}{2K}\right)\nu\rho\geq0.
\end{eqnarray*}
Here, we use that the term between parenthesis is non-negative. This is because the
real function $e^t-1-t$ is non-negative everywhere.

By taking,
$$
f(\nu)=\int_0^{\nu}f'(t)dt
$$
one has $\Delta^{II}(h+f(\nu))\geq 0$ on the surface and $h+f(\nu)\leq 0$ on
the boundary (because $f'(\nu)\geq 0$ and $\nu\leq 0$).

Hence, the maximum height is less than or equal to
$$
c_K:=\int_{-1}^0 f'(t) dt.
$$
\end{proof}

\begin{remark}
It is clear that the height estimate $c_K$, when $k\neq 0$, is not reached for any graph with
positive constant $K$ and boundary on a slice since
$$
e^{-\eps\frac{1-\nu^2}{2K}}-1+\eps \frac{1-\nu^2}{2K}
$$
is positive for $\nu\neq-1$ as well as $\Delta^{II}(h+\phi(\nu))$. But then the maximum principle at the highest point shows $\Delta^{II}(h+\phi(\nu))$ vanishes identically; a contradiction.

But, when $k=0$, by (\ref{estik0}), if the maximum height is attained at a point,
then $h - \nu/ \sqrt{K}$ vanishes identically on $S$. Thus, using (\ref{estik0})
again, $\kappa$ and $E$ vanish identically. That is, the domain $\Omega$ is flat and
$S$ is totally umbilical.
\end{remark}

As a standard consequence of the Alexandrov reflection principle for surfaces of
constant mean curvature with respect to the slices $\m \times \{t_0\}$, we have the
following Corollary

\begin{corolario}\label{estver}
Let $\psi : S \longrightarrow \m\times \r$ be a compact embedded surface with
positive constant extrinsic curvature $K$ and whose boundary is contained in the
slice $\m\times \set{0}$.  Let $k$ be the infimum of the Gauss curvature on $\m$.
Then, there exists a constant $c_K $ (depend only on $K$ and $k$) such that $h(p)
\leq 2 c_K$ for all $p \in S$.
\end{corolario}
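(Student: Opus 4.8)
The plan is to deduce this Corollary from Theorem \ref{testver} by a standard Alexandrov reflection argument applied to the slices $\m\times\{t_0\}$. The key conceptual point is that Theorem \ref{testver} bounds the height of a \emph{graph} over $\Omega$ with boundary in a slice, whereas here $S$ is only assumed to be a compact embedded surface with boundary in $\m\times\{0\}$; it need not be a graph. So first I would set up the reflection. Since $S$ is compact, its height function $h$ attains a maximum; let $h_{\max}=\max_{p\in S}h(p)$, and suppose for contradiction that $h_{\max}>2c_K$. I would reflect $S$ through the slices $\m\times\{t_0\}$ for $t_0$ decreasing from just below $h_{\max}$, comparing the reflected copy of the part of $S$ above $\m\times\{t_0\}$ with the part of $S$ below it.

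The heart of the argument is that Alexandrov reflection is valid here because surfaces of constant extrinsic curvature $K>0$ obey a tangency (comparison) principle: at an interior tangency or a boundary tangency of two such surfaces lying locally on the same side, the surfaces coincide, since $K>0$ means $II$ is definite and the surfaces are strictly locally convex, so the usual elliptic maximum principle for the graph equation of prescribed positive extrinsic curvature applies. This is what the phrase ``Alexandrov reflection principle for surfaces of constant mean curvature with respect to the slices'' is invoking — the slices $\m\times\{t_0\}$ are totally geodesic and the ambient reflection across each is an isometry, so a reflected $K$-surface is again a $K$-surface. I would run the reflection downward: starting from the top, the reflected upper cap stays strictly above the lower part of $S$, and by the tangency principle the first contact can only occur once the reflecting slice has descended to the level of $\partial S$, i.e. down to the slice $\m\times\{0\}$ containing the boundary.

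From the reflection I extract the geometric consequence that the part $S^+$ of $S$ lying above the symmetry slice reached by the reflection is a graph over its projection in $\m$, and likewise for the reflected piece. Concretely, the reflection shows that above the highest reflecting slice that makes contact only at $\partial S$, the surface is a vertical graph; the reflection plane can be pushed down to height $0$ without the reflected cap ever crossing $S$, which forces the portion of $S$ above $\m\times\{0\}$ to split (via the reflection symmetry detected during the process) into at most two graphical pieces, each a graph with boundary in a slice. Applying Theorem \ref{testver} to each such graphical piece gives a height gain of at most $c_K$ per piece. Since the reflection descends from $h_{\max}$ down to $0$ and the graphical estimate controls each of the two pieces by $c_K$, the total height is at most $2c_K$, which is the claimed bound and contradicts $h_{\max}>2c_K$.

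The step I expect to be the main obstacle is the careful justification that the Alexandrov reflection actually terminates at the boundary slice and yields \emph{exactly} graphical pieces to which Theorem \ref{testver} applies — in particular, verifying the tangency principle in the form needed (interior and boundary versions) for the prescribed-positive-extrinsic-curvature equation, and tracking that the reflection does not run off the surface before reaching height $0$. The factor of $2$ is precisely the signature of this reflection: one invokes the graph estimate $c_K$ both for the portion above the reflection slice and, after reflecting, for the portion the reflected cap certifies is graphical, so that the extreme height is at most twice $c_K$. Once the tangency principle is in hand, the remaining bookkeeping is routine, and I would simply cite the standard Alexandrov machinery rather than reprove it.
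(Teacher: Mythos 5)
Your proposal follows exactly the paper's route: the paper proves this corollary in one line, as ``a standard consequence of the Alexandrov reflection principle \ldots with respect to the slices $\m\times\{t_0\}$'' combined with Theorem \ref{testver}, and the ingredients you supply (the slices are totally geodesic, reflection across them is an ambient isometry preserving constant $K$, and the interior/boundary tangency principle holds because $K>0$ makes $II$ definite and the prescribed-extrinsic-curvature equation elliptic on the locally convex surface) are precisely what the paper leaves implicit.

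One step of your bookkeeping is misstated, however, and as written it would fail: the reflecting slice cannot in general be ``pushed down to height $0$,'' and first contact is not tied to the slice $\m\times\{0\}$. The reflection must stop at $t_0=h_{\max}/2$, the first moment the reflected cap reaches the boundary slice; below that level the reflected image of $S^{+}(t_0)=S\cap\{t>t_0\}$ dips under the slice containing $\partial S$, where it can cross the planar domain $\Omega\subset\m\times\{0\}$ capping off $S$, and the comparison argument no longer applies. The correct accounting is: a tangency at any $t_0>h_{\max}/2$ would make $S$ symmetric about $\m\times\{t_0\}$ and hence force boundary points at height $2t_0>0$, contradicting $\partial S\subset\m\times\{0\}$; therefore no tangency occurs for $t_0>h_{\max}/2$, so $S^{+}(h_{\max}/2)$ is a vertical graph with boundary in the slice $\m\times\{h_{\max}/2\}$, and Theorem \ref{testver} (after a vertical translation) gives $h_{\max}-h_{\max}/2\leq c_K$, i.e.\ $h_{\max}\leq 2c_K$. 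This replaces your ``two graphical pieces'' picture --- the part of $S$ below the symmetry slice need not be a graph at all --- while producing the same factor of $2$, which comes from the halfway stopping level rather than from two separate applications of the graph estimate.
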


We also observe that if $S$ is a non-compact properly embedded $K-$surface without
boundary in $\m\times\r$ and $\m$ is compact then $S$ must have at least one top end
and one bottom end. This is a consequence of our height estimates (see, for
instance, \cite{HLR}).

\subsection{Horizontal height estimates.}

Now, we consider a compact embedded $K-$surface in $\hr$ with boundary on a vertical
plane and obtain distance estimates to this plane.

\begin{teo}\label{testhor}
Let $S$ be a compact embedded surface in $\hr$, with extrinsic curvature a constant $K>0$. Let $P$
be a vertical plane in $\hr$ and assume $\partial S \subset P$. Then the distance from $S$ to $P$
is bounded; i.e., there is a constant $d$, independent of $S$, such that
$$ \mbox{dist}(q , P) \leq d \, , \quad \forall q \in S .$$
\end{teo}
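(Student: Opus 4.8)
The plan is to reduce the statement to a bound inside the hyperbolic factor and then to trap $S$ with the rotational $K-$spheres of Proposition \ref{revh2r}, slid in from infinity. Write $P=\gamma\times\r$ with $\gamma$ a complete geodesic of $\hc\equiv\h ^2$. Since the metric of $\hr$ is the product $\g+dt^2$ and $P$ is totally geodesic, the distance from $q\in\hr$ to $P$ equals the intrinsic distance in $\h ^2$ from $\pi(q)$ to $\gamma$; set $b(q)=\mbox{dist}_{\h ^2}(\pi(q),\gamma)$. It therefore suffices to bound $b$ on $S$ by a constant depending only on $K$. The essential input is that the rotational $K-$sphere $\Sigma$ of Proposition \ref{revh2r} has bounded horizontal size: its generating curve satisfies $k(u)\leq k_{max}=\cosh^{-1}\exp(1/2K)$, so every point of $\Sigma$ projects to within $\h ^2$-distance $k_{max}$ of the foot of its vertical axis. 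Consequently, if the axis foot is at distance $c$ from $\gamma$, then $\Sigma$ is contained in the band $\{q:\ c-k_{max}\leq b(q)\leq c+k_{max}\}$.

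I would carry out the sweeping as follows. After a reflection across $P$ (an isometry, since vertical planes are totally geodesic) assume that the maximum $b_{max}$ of $b$ on the compact surface $S$ is attained at a point $q^{*}$ on the exterior side of $P$, and let $\delta$ be the complete horizontal geodesic through $\pi(q^{*})$ meeting $\gamma$ orthogonally. Let $T_s$ be the hyperbolic translation of $\h ^2$ along $\delta$ (acting on $\hr$ by a slice-preserving isometry), normalized so that the translated sphere $\Sigma_s=T_s(\Sigma)$ has its axis foot on $\delta$ at distance $s$ from $\gamma$, and fix the height of the axis so that the equator of $\Sigma_{s}$ lies at the height of $q^{*}$. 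For $s$ large $\Sigma_s$ lies in the band $[s-k_{max},s+k_{max}]$, hence is disjoint from the compact $S$; I then decrease $s$. At $s=b_{max}-k_{max}$ the outer equatorial point of $\Sigma_s$ is exactly $q^{*}\in S$, so an intersection already occurs there; therefore the first contact value $s_0$ (the largest $s$ with $\Sigma_s\cap S\neq\emptyset$) satisfies $s_0\geq b_{max}-k_{max}$, and the inner edge of $\Sigma_{s_0}$ sits at $b=s_0-k_{max}\geq b_{max}-2k_{max}$.

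Suppose now that $b_{max}>2k_{max}$. Then the inner edge of $\Sigma_{s_0}$ has $b>0$, so $\Sigma_{s_0}$ is entirely contained in $\mbox{ext}_{\hr}(P)$ and in particular disjoint from $P\supset\partial S$; hence the first-contact point $q_0$ is interior to both $S$ and $\Sigma_{s_0}$, and $S$ lies locally on the inner side of the sphere it has just met. Both surfaces are locally strictly convex with the same constant extrinsic curvature $K$ and are tangent at $q_0$. Writing them as graphs over their common tangent plane, the equation $\det(II)/\det(I)=K$ becomes a Monge--Amp\`ere type operator which is elliptic precisely because the graphs are convex, so the geometric (interior) tangency principle forces $S$ and $\Sigma_{s_0}$ to agree near $q_0$ and then, by connectedness of $S$, everywhere. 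This is impossible, since $\Sigma_{s_0}$ is a closed sphere lying in $\mbox{ext}_{\hr}(P)$ while $S$ has boundary on $P$. Hence $b_{max}\leq 2k_{max}$, and the theorem holds with $d=2k_{max}=2\cosh^{-1}\exp(1/2K)$, a constant independent of $S$.

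The main obstacle is the interior tangency principle for two surfaces sharing the same positive constant extrinsic curvature: one must verify that, in graph coordinates over the common tangent plane, the map sending a convex graph to its extrinsic curvature is elliptic with the correct monotonicity, so that the difference of the two graph functions obeys a linear homogeneous elliptic inequality of fixed sign and the Hopf maximum principle upgrades the tangency to local, hence global, coincidence. A direct maximum-principle argument in the spirit of Theorem \ref{testver} seems less convenient here, because the distance function $b$ to a vertical plane does not enjoy the product structure that the height function does, so its ambient Hessian (carrying the geodesic curvature $\tanh b$ of the equidistant curves) depends on $b$ itself. Finally, the same scheme yields the announced $H>1/2$ version: one replaces $\Sigma$ by the rotational $H-$sphere, which is again compact with horizontal radius depending only on $H$ for $H>1/2$, and invokes the tangency principle for constant mean curvature in place of constant extrinsic curvature.
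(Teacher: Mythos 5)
Your reduction to the horizontal distance $b(q)=\mbox{dist}_{\h^2}(\pi(q),\gamma)$, the band containment of the translated rotational sphere, and the estimate $s_0\geq b_{max}-k_{max}$ for the first-contact parameter are all fine. The genuine gap is the sentence ``$S$ lies locally on the inner side of the sphere it has just met,'' followed by the appeal to the tangency principle. When you slide $\Sigma_s$ in from infinity, at the largest contact parameter $s_0$ the surface $S$ lies (locally) \emph{outside} the closed ball bounded by $\Sigma_{s_0}$, and this is compatible with two opposite configurations: a nested tangency, where the unit normals making $II$ positive definite agree, or an \emph{external (kissing) tangency}, where they are opposite. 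In the kissing case the two graphs over the common tangent plane are oppositely convex, the Monge--Amp\`ere operator is not elliptic along the interpolation between them, and no Hopf-type conclusion holds --- indeed two congruent rotational $K-$spheres can be externally tangent without coinciding, so the configuration genuinely carries no contradiction. Worse, the bad orientation is the \emph{typical} one for your sweep: at the farthest point $q^{*}$ the surface is tangent from inside to the equidistant cylinder $\{b=b_{max}\}$, and since $II$ is definite a comparison of second fundamental forms there shows the mean curvature vector of $S$ at $q^{*}$ points \emph{toward} $P$; a sphere arriving from beyond the tip touches with its mean curvature vector pointing \emph{away} from $P$. (Your observation that $\Sigma_{b_{max}-k_{max}}$ passes through $q^{*}$ with matching tangent plane and matching orientation does not help either: that sphere is not known to be disjoint from $S$ elsewhere --- its ball sweeps $b\in[b_{max}-2k_{max},b_{max}]$, deep inside the region where $S$ lives --- so you have tangency without one-sidedness, and the maximum principle again gives nothing.)

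This orientation issue is exactly what the paper's proof is engineered to avoid, and why it is longer than a single slide. The paper first uses Alexandrov reflection with the foliation $P_{\gamma}(t)$ to reduce to $X-$graphs with boundary in $P$; then, to compare with the rotational sphere $S_R$, it does \emph{not} bring spheres in from infinity but threads them through the interior of the region $Q$ bounded by $S\cup D$, along a geodesic $\eta$ whose points are at distance greater than $c$ from $\partial\Omega$ inside the ``rectangle'' $\Omega$: a sphere entering at $q_0$ and forced to cross $\Omega$ meets $S$ for the first time from the concave side, so at that first contact the mean curvature vectors of $S$ and the sphere agree and the maximum principle legitimately yields coincidence, hence a contradiction. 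This only bounds the \emph{diameter of the sections} $S(t)=P(t)\cap S$ by $2c$, not the distance to $P$, which is why the paper needs a second, independent step --- the compactness argument with the Killing cylinder $\mathcal{C}$ and Alexandrov reflection through the tilted vertical planes $R(t)$ --- to exclude thin pieces running off to infinity. Your one-step scheme skips both safeguards, and the same objection applies verbatim to your proposed $H>1/2$ variant.
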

\begin{proof}
Let $q\in S$ be a furthest point from $P$. Up to isometry, we can assume $q \in \hc$
and $q \in \mbox{ext}_{\hr}(P)$. Let $P_{\gamma}(t)$ be the foliation of vertical
planes along $\gamma$ with $P_{\gamma}(0)=P$ and $q \in P_{\gamma}(h)$. Let $X$
denote the horizontal Killing field of $\hr$ generated by translations along $\gamma
$ ($X$ is tangent to each $\h ^2 \times \set{\tau}$ and is translation along $\gamma
\times \set{\tau}$); $X$ is orthogonal to the planes $P_{\gamma}(t)$.

Now, do Alexandrov reflection with the planes $P_{\gamma}(t)$, starting at $t = h$, and decrease $t$.
For $h/2<t \leq h$, the symmetry of the part $S$ in $\mbox{ext}_{\hr}(P_{\gamma}(t))$, does not
touch $\partial S$, since $\partial S \subset P$. Hence the Alexandrov reflection technique shows
that the symmetry of $S^+ (t)= S \cap \mbox{ext}_{\hr}(P_{\gamma}(t))$, by $P_{\gamma}(t)$, intersects
$S$ only at $S \cap P_{\gamma}(t)$ and $S$ is never orthogonal to $P_{\gamma}(s)$ for $t \leq s \leq h$.
Since $X$ is orthogonal to each $P_{\gamma}(t)$, we conclude $X$ is transverse to $S^+ (h/2)$, and
$S^+ (h/2)$ is a graph over a domain of $P_{\gamma}(h/2)$ with respect to the orbits of $X$.

Thus, to prove the Theorem, it suffices to prove that $X-$graphs are a bounded distance from $P$, assuming
the boundary of the graph is in $P$.

Now, suppose $S$ is an $X-$graph over a domain $D \subset P$ and chose $P_{\gamma } (t)$ as before. Let
$S_R$ be the rotationally invariant sphere whose extrinsic curvature $K$ is the same as that of $S$.
Denote by $c=c(K)$, the diameter of $S_R$.

We will now prove that for each $t > 2c $, the diameter of each connected component of $S(t) = P(t)\cap S$,
is at most $2c$. Suppose not, so for some component $C(t)$ of $S(t)$, there are points $x,y$ inside
the domain $D(t)$ of $P(t)$ bounded by $C(t)$ with $\mbox{dist}(x,y)> 2c$. Let $Q$ be the bounded domain
of $\hr$ bounded by $S \cup D$. Let $\beta$ be a path in $D(t)$ joining $x$ to $y$, $\beta$ is disjoint from $C(t)$.
Let $\Omega$ be the ``rectangle'' formed by the orbits of $X$ joining $\beta$ to $P$;
$\Omega \subset Q$. Let $p$ be a point of $\Omega $ whose distance to $\partial \Omega $ is
greater than $c$; $p$ exists by construction of $\Omega$.

Let $\eta$ be the geodesic through $p$, each of whose points is a distance greater
than $c$ from $\partial \Omega$; it is easy to find such an $\eta$ is in the plane
$P(t)$ containing $p$. $\eta$ ``enters'' $Q$ at a first point $q_0$ and ``leaves''
$Q$ at a last point $q_1$.

Now, consider the family of spheres centered at each point of $\eta$,
each sphere obtained from the rotational sphere $S_R$ (of extrinsic curvature $K$)
by a translation of $\hr$. Consider the family of spheres as entering $Q$ at $q_0$.

Then, there is some first sphere in the family (coming from $q_0$) that touches $\Omega$
for the first moment at an interior point of $\Omega$. Then the sphere passes through $\Omega$,
not touching $\partial Q$ initially, and the sphere passes through $\Omega$ without touching
$\partial \Omega$. Since the spheres leave $Q$ at $q_1$ there is some sphere that touches
$\partial Q \cap S$ at a first point of contact of the spheres, that passed through $\Omega$,
with $S$. At this first point of contact, the mean curvature vectors of $S$ and the rotational
sphere are equal.  Hence $S$ equals this sphere by the maximum principle; a contradiction.

Now, if the Theorem is false, there is a sequence of graphs $S_n$ over domains $D_n
\subset P$, with $\mbox{diameter}(D_n) < 2c$ and $\mbox{dist}(S_n , P)$ unbounded.

After an ambient isometry we can assume the $D_n$ are contained in a fixed disk $D$
and the $S_n$ are contained in the horizontal Killing cylinder $\mathcal{C}$ over
$D$, a tubular neighborhood of a horizontal geodesic $\gamma $. We will use
``tilted'' vertical planes to show this is impossible.

We  can assume, without lost of generality, that $\gamma =\set{ 0,\pi}$, $P$ is the
vertical plane over the geodesic $\set{ \pi /2,3 \pi /2}$, and the graphs  $S_n$
satisfy  $\pi(S_n)$ are asymptotic to 0.  Consider the   vertical plane
$Q(s_1,s_2)=\set{ s_1,s_2} \times \r$. A simple calculation shows that for $s_1 = 0
$ and $s_2$ positive and close to $0$, then   the symmetry through $Q(0,s_2)$ of
$\mathcal{C} \cap \mbox{int}_{\hr} (Q(0,s_2))$ does not intersect $P$. In
particular, the symmetry of the part of any $S_n \cap \mbox{int}_{\hr} (Q(0,s_2))$,
does not intersect $\partial S_n \subset D$.  By continuity, for $s_1$ negative and
sufficiently close to 0, the above last two statements continue to hold, i.e., the
symmetry through $Q(s_1,s_2)$ of $\mathcal{C} \cap \mbox{int}_{\hr} (Q(s_1,s_2))$
does not intersect $P$ and the symmetry of the part of any $S_n \cap
\mbox{int}_{\hr} (Q(s_1,s_2))$, does not intersect $\partial S_n \subset D$.

Now observe that the symmetry of $\mathcal{C} \cap \mbox{int}_{\hr} (Q(s_1,s_2))$
goes outside $\mathcal{C}$, hence  the symmetry of $S_n \cap \mbox{int}_{\hr}
(Q(s_1,s_2))$ also goes outside  $\mathcal{C}$, for $n$ sufficiently large.

Choose $a$ and $b$ between $s_1$ and $s_2$ so that $Q(a,b)$ is disjoint from
$\mathcal{C}$.  Let $R(t)$ be a foliation by vertical planes of the region of $\hr$
between $Q(a,b)$ and $Q(s_1,s_2)$ with $R(0) = Q(a,b)$ and $R(1) = Q(s_1,s_2)$, $0
\leq t \leq 1$.

Consider doing Alexandrov reflection with the planes $R(t)$.  Choose $n$ large so
that the symmetry of $\ (S_n) \cap \mbox{int}_{\hr} (Q(s_1,s_2))$ has points outside
$\mathcal{C}$.  The symmetry of this part of $S(n)$ through each $R(t)$ is disjoint
from  $P$, hence disjoint from  $\partial S_n $. Also the symmetry of this part of
$S(n)$ through $R(1)$ goes outside  $\mathcal{C}$, and $R(0)$ is disjoint from
$\mathcal{C}$.  Hence there is a smallest $t$ such that the symmetry of $S(n)$
through $R(t)$ touches $S(n)$ at some point.  Thus $R(t)$ is a symmetry plane of
$S(n)$, which is a contradiction.  This completes the proof.
\end{proof}

This proof also works for properly embedded surfaces with constant mean curvature
greater than $1/2$. Thus, \cite[Theorem 1.2]{NR} can be extended for $H-$surfaces
with $H> 1/2$, that is,

\begin{corolario}\label{nuevocorolario}
Let $H >1/2$ and let $S$ be a properly embedded $H-$surface in $\hr$ with finite
topology and one end. Then $S$ is contained in a vertical cylinder of $\hr$.
\end{corolario}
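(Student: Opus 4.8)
The plan is to reduce the statement to showing that the horizontal projection $\pi(S)\subset\hc\equiv\h^2$ is bounded, since a vertical cylinder is exactly a set of the form $\overline{\Omega}\times\r$ with $\Omega\subset\h^2$ bounded, and $S\subset\overline{\pi(S)}\times\r$ always holds. Thus it suffices to prove $\bi\pi(S)=\emptyset$, and I would argue by contradiction: suppose some $\theta_0\in\si$ lies in $\bi\pi(S)$. Because $S$ has finite topology and a single end, there is a compact core $S_0\subset S$ with $S\setminus S_0$ a single annular end; as $\pi(S_0)$ is a bounded subset of $\h^2$, any accumulation of $\pi(S)$ on $\si$ must be produced by the end, so the end escapes to the horizontal infinity of $\hr$ in the direction of $\theta_0$.

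Next I would exploit the horizontal distance estimate, Theorem \ref{testhor}, whose proof is valid for $H$-surfaces with $H>1/2$. Choose a horizontal geodesic $\gamma$ with $\gamma^+=\theta_0$ and consider the foliation $P_\gamma(t)$ of vertical planes along $\gamma$ (Definition \ref{d2.4}); as $t\to+\infty$ the planes recede to the point $\theta_0$, so $\mbox{ext}_{\hr}(P_\gamma(t))$ shrinks to a neighborhood of $\theta_0\times\r$ at infinity. For $t$ large the compact core $S_0$ lies in $\mbox{int}_{\hr}(P_\gamma(t))$ while the escaping end lies in $\mbox{ext}_{\hr}(P_\gamma(t))$. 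The piece $S^-(t)=S\cap\mbox{int}_{\hr}(P_\gamma(t))$ then satisfies $\partial S^-(t)=S\cap P_\gamma(t)\subset P_\gamma(t)$ and, crucially, is compact, because the only non-compact part of $S$ has been pushed to the exterior side. Theorem \ref{testhor} then provides a constant $d=d(H)$, independent of the piece, with $\mbox{dist}(q,P_\gamma(t))\leq d$ for every $q\in S^-(t)$. Since $S_0\subset S^-(t)$ is fixed while $\mbox{dist}(S_0,P_\gamma(t))\to+\infty$ as the planes recede (the geodesics carrying $P_\gamma(t)$ march off toward $\theta_0$), this is a contradiction. Hence $\bi\pi(S)=\emptyset$ and $S$ is contained in a vertical cylinder.

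The main obstacle is precisely the claim that $S^-(t)$ is compact, equivalently that the end escapes in a single asymptotic direction so that it can be quarantined in $\mbox{ext}_{\hr}(P_\gamma(t))$ by planes orthogonal to one geodesic. I would control this with the interior ingredients of the proof of Theorem \ref{testhor}: the maximum-principle comparison against the rotational $H$-spheres shows that every vertical-plane section of $S$ far from a fixed plane has diameter at most $2c$, which confines the end to a bounded neighborhood of a horizontal geodesic and forces $\bi\pi$ of the end to reduce to the single point $\theta_0$; the attendant Alexandrov reflection step is legitimate since $\partial S^-(t)\subset P_\gamma(t)$, so the reflected copies never meet a boundary, reducing the relevant piece to the $X$-graph situation in which the estimate was established. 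Because all of these tools — embeddedness, the sphere barriers, and Alexandrov reflection — remain available for mean curvature $H>1/2$ through the rotational $H$-spheres, the argument transfers verbatim, which is exactly what the preceding remark asserts and what yields Corollary \ref{nuevocorolario}.
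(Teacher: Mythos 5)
Your overall architecture --- reduce to $\bi\pi(S)=\emptyset$, trap the compact core behind receding vertical planes $P_{\gamma}(t)$, and contradict the horizontal distance estimate --- is the same skeleton as the paper's implicit proof, which simply observes that the proof of Theorem \ref{testhor} works for $H>1/2$ and then invokes the argument of \cite[Theorem 1.2]{NR}. But there is a genuine gap exactly where you yourself locate it: the compactness of $S^{-}(t)=S\cap\mbox{int}_{\hr}(P_{\gamma}(t))$. Theorem \ref{testhor} is stated and proved only for \emph{compact} embedded surfaces with boundary in a vertical plane, so you may apply it to $S^{-}(t)$ only after showing that the noncompact tail of the annular end eventually stays in $\mbox{ext}_{\hr}(P_{\gamma}(t))$. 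Nothing in your argument establishes this: a priori the end could diverge vertically inside the half-space $\mbox{int}_{\hr}(P_{\gamma}(t))$ while simultaneously accumulating at $\theta_0$, or it could spiral so that $\bi\pi(S)$ is much larger than $\{\theta_0\}$; in either case $S^{-}(t)$ is noncompact for every $t$ and the contradiction never gets off the ground.

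Your proposed patch does not close this gap. The diameter bound on the sections $P(t)\cap S$ in the proof of Theorem \ref{testhor} is derived for a compact $X$-graph with $\partial S\subset P$: it uses the bounded region $Q$ enclosed by $S\cup D$, the rectangle $\Omega\subset Q$ of $X$-orbits, and a first-point-of-contact argument with the family of translated rotational spheres, all of which require compactness (for a boundaryless noncompact end the region between $S$ and the plane may be unbounded, and a first contact need not occur). Moreover, even granting a diameter bound on the sections in one direction $\gamma$, it does not follow that the end is confined to a tube or that $\bi\pi$ of the end is the single point $\theta_0$: each small section may drift vertically or horizontally within its plane as $t$ grows, and sections in other directions are uncontrolled. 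The ingredient that performs this quarantine in the paper --- invoked explicitly in the proof of Theorem \ref{lnor2} and proved in \cite{NR} using properness, finite topology, one end, and the distance estimate itself --- is the Plane Separation Lemma: for two vertical planes bounding disjoint half-spaces sufficiently far apart, at least one of the two far pieces of $S$ has only compact components. That lemma is what makes the far piece compact with boundary on the plane, so that Theorem \ref{testhor} applies and yields the contradiction you sketch; without stating and proving it (or an equivalent confinement statement), your argument is incomplete.
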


\section{Classification of complete $K-$surfaces in $\hr$ and $\s ^2 \times \r$.}

In this Section $\m (\eps)$ stands for $\s ^2$ or $\h ^2 $, depending on $\eps =1$
or $\eps = -1 $, respectively. We continue working with a conformal parameter $z$
for the second fundamental form $II$. We now come to a  key Lemma.

\begin{lema}
Let $\psi : S \longrightarrow \m (\eps)\times \r$ be an immersed $K-$surface. If we consider $S$ as the Riemann
surface with the conformal structure induced by its second fundamental form, then the quadratic
form given by (\ref{Q}) verifies
\begin{equation}
|Q_{\zb}|^2\leq \frac{K g'(\nu)^2(1-\nu
^2)^2|h_z|^2}{4\chi(\nu)}|Q|^2.\label{QzleqQ}
\end{equation}where $g$ and $\chi$ are given by (\ref{gdenu}) and (\ref{fi1}) respectively.
\end{lema}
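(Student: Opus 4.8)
The plan is to compute $Q_{\bar z}$ directly from the definition $Q = E + g(\nu)\,h_z^2$ and then bound its modulus by the quantity on the right-hand side. The three ingredients I would assemble first are the derivative formulas already available: equation (\ref{phi}) giving $E_{\bar z} = -\kappa \frac{\nu\rho}{K}h_z$, equation (\ref{nuzb}) giving $\nu_{\bar z} = \frac{\alpha K}{\rho}$, and the Laplacian formula (\ref{hzw}) for $h_{z\bar z}$. Differentiating $Q$ with respect to $\bar z$ gives
\begin{equation*}
Q_{\bar z} = E_{\bar z} + g'(\nu)\nu_{\bar z}\,h_z^2 + 2 g(\nu) h_z h_{z\bar z}.
\end{equation*}
The hope — and this is the crux of the whole lemma — is that after substituting these formulas, the terms carrying the explicit curvature factor $\kappa$ (from $E_{\bar z}$ and from $h_{z\bar z}$) cancel precisely because $g(\nu)$ was constructed to solve the ODE defining it. In other words, the definition of $g$ via $\phi(\nu) = \frac{\nu^2-1}{2}g'(\nu)$ is exactly what is needed to kill the $\kappa$-dependence, leaving $Q_{\bar z}$ proportional to $g'(\nu)$ times a controllable quantity.

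Concretely, I would carry out the substitution step by step, replacing $\nu_{\bar z}$ using (\ref{nuzb}) and then re-expressing $\alpha$ in terms of $h_z$ and $h_{\bar z}$ via (\ref{hz}) or (\ref{alpha}); the appearance of $\nu_{\bar z}$ ultimately introduces a factor linear in $h_z$ after the curvature terms cancel. The expected clean outcome is an identity of the schematic form
\begin{equation*}
Q_{\bar z} = -\frac{g'(\nu)(1-\nu^2)}{2}\,\overline{\left(\text{something involving } h_z\right)},
\end{equation*}
where the ``something'' can be matched against $|Q|$. To pass from an exact formula for $Q_{\bar z}$ to the \emph{inequality} (\ref{QzleqQ}), I would use the expression (\ref{modQ}) for $|Q|^2$, namely $|Q|^2 = (F+g|h_z|^2)^2 + D(1+g\|T\|^2)$, together with the positivity of $K(II,A)$ established in (\ref{KIIA})--(\ref{fi2}). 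Since $K(II,A) = \frac{1}{K}(1+g\|T\|^2) = \frac{1}{K}\chi(\nu)$ and $D<0$, the term $D(1+g\|T\|^2)$ is a controlled negative quantity, so $|Q|^2 \geq |F+g|h_z|^2|^2 - \text{(something)}$; the factor $\chi(\nu)$ appearing in the denominator of (\ref{QzleqQ}) strongly suggests that the bound is obtained by writing $|Q_{\bar z}|^2$ over $|Q|^2$ and recognizing $\chi(\nu) = 1+g(\nu)\|T\|^2$ from (\ref{fi1}).

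The main obstacle I anticipate is twofold. First, verifying the exact cancellation of the $\kappa$ terms requires careful bookkeeping: one must confirm that the combination $E_{\bar z} + 2g h_z h_{z\bar z}$ produces precisely the curvature terms that the ODE for $g$ was designed to eliminate, and here the definition $\phi = \frac{\nu}{K}((2K+\eps(\nu^2-1))g - \eps) - (1-\nu^2)g'$ with $k=\eps$ must be invoked in exactly the right normalized form. Second, the passage to the inequality is not a pure equality; I expect a Cauchy--Schwarz type estimate or a direct algebraic comparison using $|h_{\bar z}|^2 = |h_z|^2$ and (\ref{hzhw}) to bound the quadratic expression in the derivatives of $h$ by $|Q|^2 / \chi(\nu)$. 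Getting the constant $\frac{K g'(\nu)^2(1-\nu^2)^2}{4}$ exactly right — in particular the factor $1/4$ coming from the $\frac{1}{2}$ in the ODE squared, and the factor $K$ and $\chi(\nu)$ in the right places — will be the delicate part, and I would double-check it by testing against the highest-point situation where $h_z=0$ forces $Q_{\bar z}=0$ consistently.
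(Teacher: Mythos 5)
Your plan coincides with the paper's own proof: differentiating $Q=E+g(\nu)h_z^2$ and substituting (\ref{phi}), (\ref{hzw}) and (\ref{nuzb}), the ODE defining $g$ does exactly what you predicted, converting $\frac{\nu}{K}\bigl((2K+\eps(\nu^2-1))g-\eps\bigr)$ into $\frac{1-\nu^2}{2}g'$, and together with $\norm{T}^2=1-\nu^2$ this collapses to the exact formula $Q_{\bar{z}}=\rho\, g'(\nu)\bigl(\frac{\nu^2-1}{2}h_z+|h_z|^2\frac{\overline{\alpha}}{D}\bigr)$. The one point to sharpen is your hedge at the end: the inequality is not obtained by a Cauchy--Schwarz estimate (which would not yield this constant) but by your other alternative, a direct algebraic identity --- expanding $|Q_{\bar{z}}|^2$ via (\ref{TT}) and (\ref{hzhw}) and eliminating $D$ through (\ref{modQ}) gives $|Q_{\bar{z}}|^2=\frac{Kg'(\nu)^2(1-\nu^2)^2|h_z|^2}{4\chi(\nu)}|Q|^2-\frac{Kg'(\nu)^2|h_z|^2}{4\chi(\nu)}\bigl((1-\nu^2)F-(2+g(\nu)(1-\nu^2))|h_z|^2\bigr)^2$, where the subtracted term is a manifest perfect square, so the bound follows with the stated constant.
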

\begin{proof}
First, we compute the derivative of $Q$, which is given by
\begin{equation*}
Q_{\zb} =  E_{\zb} + g' \nzb \hz ^2 +   2 g \hz h_{z\zb}.
\end{equation*}
So, from (\ref{phi}) and (\ref{hzw}), one gets
$$
E_{\zb} + 2 g \hz h_{z\zb}= \frac{\nu}{K}\left( (2K+\eps(\nu^2-1)) g - \eps
\right)\rho \hz.
$$
And (\ref{nuzb}), (\ref{hz}) and (\ref{TT}) give
$$
\nzb E = -\rho \frac{E \alpha}{D} = \rho \frac{\ac}{D} F - \rho h_z,
$$
$$
\nzb h_z^2 = - \rho h_z \frac{\alpha h_z}{D}= \rho \frac{\ac}{D} |h_z|^2 -
\norm{T}^2 \rho h_z,
$$
$$
 g' \nzb \hz ^2 = g' |h_z|^2  \rho \frac{\ac}{D}- g'\norm{T}^2  \rho h_z.
$$

Consequently,
\begin{equation}\label{Qzb}
Q_{\zb} = \rho g'(\nu)\left( \frac{\nu^2-1}{2}h_z + |h_z|^2
\frac{\ac}{D}\right).
\end{equation}

So,
\begin{equation*}
\begin{split}
|Q_{\zb}|^2 &= \rho ^2g'(\nu)^2\left( (1-\nu^2)^2\frac{|h_z|^2}{4} -
(1-\nu^2)\frac{|h_z|^2}{2}(\frac{\alpha h_z + \ac h_{\zb}}{D})+|h_z|^4
\frac{|\alpha|^2}{D^2}\right)\\
 &=\rho ^2g'(\nu)^2|h_z|^2\left( -\frac{(1-\nu^2)^2}{4}
+|h_z|^2 \frac{|\alpha|^2}{D^2}\right)\\
 &=\frac{\rho ^2g'(\nu)^2|h_z|^2}{-D}\left( D\frac{(1-\nu^2)^2}{4}
+|h_z|^2 \frac{|\alpha|^2}{-D}\right)\\
&= K g'(\nu)^2|h_z|^2\left( D\frac{(1-\nu^2)^2}{4} +|h_z|^2 ((1-\nu ^2)F
-|h_z|^2)\right),
\end{split}
\end{equation*}
where we have used (\ref{TT}), (\ref{extrinseca}) and (\ref{hzhw}).

Thus, from (\ref{modQ})
$$
D= \frac{|Q|^2- ( F+g(\nu) \,|h_z|^2)^2 }{\chi(\nu)}.
$$

Hence, from the previous equation of $|Q_{\zb}|^2$
\begin{equation*}
\begin{split}
|Q_{\zb}|^2 &=K g'(\nu)^2|h_z|^2\left( \frac{|Q|^2- ( F+g(\nu) \,|h_z|^2)^2
}{\chi(\nu)}\
\frac{(1-\nu^2)^2}{4} +|h_z|^2 ((1-\nu ^2)F -|h_z|^2)\right)\\
 &=\frac{K g'(\nu)^2(1-\nu ^2)^2|h_z|^2}{4\chi(\nu)}|Q|^2+\\
 &-\frac{K g'(\nu)^2|h_z|^2}{4\chi(\nu)}\left( (1-\nu^2)^2(F+g(\nu) \,|h_z|^2)^2 -4 \chi(\nu)
  |h_z|^2 ((1-\nu ^2)F -|h_z|^2) \right)
\end{split}
\end{equation*}

Now, we show that the last term between parenthesis is non negative. That is,
$$\begin{array}
{l}
((1-\nu^2)F-(2+g(\nu)(1-\nu^2))|h_z|^2)^2=\\[2mm]
=(1-\nu^2)^2F^2-2F|h_z|^2(2(1-\nu^2)+g(\nu)(1-\nu^2)^2)+(4+4g(\nu)(1-\nu^2)+g(\nu)^2(1-\nu^2)^2)|h_z|^4\\[2mm]
=(1-\nu^2)^2(F^2-2Fg(\nu)|h_z|^2+g(\nu)^2|h_z|^4)+(1-\nu^2)|h_z|^2(-4F+4g(\nu)|h_z|^2)+4|h_z|^4\\[2mm]
=(1-\nu^2)^2(F+g(\nu)|h_z|^2)^2+4|h_z|^2(-Fg(\nu)(1-\nu^2)^2-F(1-\nu^2)+g(\nu)(1-\nu^2)|h_z|^2+|h_z|^2)\\[2mm]
=(1-\nu^2)^2(F+g(\nu)|h_z|^2)^2-4 \chi(\nu)
  |h_z|^2 ((1-\nu ^2)F -|h_z|^2)
\end{array}
$$
as we wanted to prove.

Therefore,
\begin{equation*}
|Q_{\zb}|^2\leq \frac{K g'(\nu)^2(1-\nu
^2)^2|h_z|^2}{4\chi(\nu)}|Q|^2.
\end{equation*}
\end{proof}

This Lemma shows that the \cite[Main Lemma]{ACT} can be used:

\begin{lema}
Let $\psi : S \longrightarrow \m (\eps) \times \r$ be an immersion with positive constant
extrinsic curvature. Then, the zeroes of $Q$ are isolated with negative index or $Q$
vanishes identically.
\end{lema}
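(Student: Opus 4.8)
The plan is to read inequality (\ref{QzleqQ}) as exactly the hypothesis of the \cite[Main Lemma]{ACT} and then simply to check that the coefficient appearing there is a continuous, locally bounded function on $S$; the conclusion is then immediate.

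First I would extract a square root in (\ref{QzleqQ}) and write it in the form
$$
|Q_{\zb}| \leq \Psi\,|Q|, \qquad \Psi = \frac{\sqrt{K}\,|g'(\nu)|\,(1-\nu^2)\,|h_z|}{2\sqrt{\chi(\nu)}},
$$
which is legitimate since $1-\nu^2\geq 0$. The \cite[Main Lemma]{ACT} asserts that any complex function $Q$ satisfying a Cauchy--Riemann inequality $|Q_{\zb}|\leq \Psi|Q|$ with $\Psi$ locally bounded either vanishes identically or has only isolated zeroes, each of which is a zero of the associated quadratic differential $Q\,dz^2$ of negative index. The underlying mechanism is the similarity principle: in a conformal chart one factors $Q = e^{\omega} f$ with $\omega$ continuous and $f$ holomorphic, so the zeroes of $Q$ coincide with those of $f$ and are therefore isolated (unless $f\equiv 0$); near a zero of order $m$ the line field determined by $Q\,dz^2$ rotates by $-m\pi$ around the point, giving index $-m/2<0$, exactly as for a genuine holomorphic quadratic differential.

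The only thing that needs verification is that $\Psi$ is continuous and locally bounded, and I would argue this factor by factor. By Remark \ref{Remark1} the function $g$, and hence $g'$, is real analytic on $[-1,1]$, so $|g'(\nu)|$ and $(1-\nu^2)$ are bounded; the term $|h_z|$ is smooth, hence locally bounded on any chart. The only possible singularity sits in the denominator $\sqrt{\chi(\nu)}$, and this is controlled precisely by (\ref{fi2}): $\chi(\nu)\geq 1$ when $\eps=1$ and $\chi(\nu)\geq K(1-e^{-1/K})>0$ when $\eps=-1$, so $1/\sqrt{\chi(\nu)}$ is uniformly bounded. Thus $\Psi$ is continuous and locally bounded, the hypotheses of \cite{ACT} are met, and the stated dichotomy follows. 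I expect the positivity of $\chi(\nu)$ to be the only genuine obstacle, and it has already been secured in (\ref{fi2}); everything else in $\Psi$ is manifestly regular.
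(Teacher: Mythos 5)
Your proposal is correct and follows essentially the same route as the paper: the paper derives the inequality (\ref{QzleqQ}) precisely so that \cite[Main Lemma]{ACT} applies directly, and its ``proof'' of this lemma is exactly that observation. Your factor-by-factor verification that the coefficient is continuous and locally bounded --- real analyticity of $g$ from Remark \ref{Remark1} and the positive lower bound on $\chi(\nu)$ from (\ref{fi2}) --- makes explicit a check the paper leaves tacit, but introduces no new idea.
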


As a consequence one has

\begin{teo}\label{Q=0}
Let $\psi : S \longrightarrow \m (\eps) \times \r $ be an immersion with positive constant
extrinsic curvature, with $S$ a topological sphere. Then $Q$ vanishes identically on
$S$.
\end{teo}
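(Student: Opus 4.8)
The plan is to apply the Poincaré--Hopf index theorem to the quadratic form $Q\,dz^2$ on the topological sphere $S$. The preceding Lemma establishes that the zeroes of $Q$ are either isolated with negative index, or that $Q$ vanishes identically. The strategy is therefore to rule out the first alternative by a global topological count: on a sphere, the sum of the indices of the zeroes of any quadratic differential equals $2\chi(S) = 4 > 0$, where $\chi$ denotes the Euler characteristic. If $Q$ did not vanish identically, its zeroes would be isolated, hence finite in number by compactness, and each would contribute a strictly negative index. The total would then be strictly negative, contradicting the value $4$. Hence $Q \equiv 0$.

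Concretely, I would first invoke the previous Lemma to reduce to the dichotomy just described. Assuming for contradiction that $Q \not\equiv 0$, compactness of $S$ forces the zero set of $Q$ to be a finite collection of points $p_1,\dots,p_m$, each with negative index $\mathrm{ind}(p_j) < 0$. I would then recall the standard fact that a (complex-analytic-type) quadratic form $Q\,dz^2$ defined with respect to a conformal structure on a closed oriented surface has total index equal to $-2\chi(S)$ when one counts the index of its line field, or equivalently that the appropriate index sum is governed by the Euler characteristic; the key point is only that this sum is a fixed nonzero constant depending on the topology of $S$ and independent of the particular $Q$. Since $S$ is a sphere, this constant is positive, while the sum of finitely many strictly negative integers is strictly negative.

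The main obstacle, and the point requiring care, is the precise bookkeeping of the index convention: one must be sure that the ``negative index'' furnished by the earlier Lemma (in the sense of \cite{ACT}) is the same quantity that enters the Poincaré--Hopf / Hopf index formula for quadratic differentials, and that its global sum is indeed forced to have the sign dictated by $\chi(S^2) = 2 > 0$. Once the sign conventions are aligned, the contradiction is immediate: a finite sum of negative integers cannot equal a positive number. I would close by noting that the conformal structure here is the one induced by $II$, which is well-defined and globally consistent since $K > 0$ makes $II$ definite, so that $Q\,dz^2$ is genuinely a quadratic form on the Riemann surface $S$ and the index theorem applies without ambiguity. This completes the proof that $Q$ vanishes identically on $S$.
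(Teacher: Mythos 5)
Your proof is correct and coincides with the paper's intended argument: the paper states the theorem as an immediate consequence of the preceding lemma, the unwritten step being precisely your Poincar\'e--Hopf count, which on a sphere forces the total index of the line field determined by $Q\,dz^2$ to be positive, hence incompatible with finitely many zeroes each of strictly negative index. The only caveat is your index bookkeeping: the line-field index sum equals $\chi(S)=2$ (the zero orders sum to $-2\chi(S)$, not the line-field indices, and $2\chi(S)=4$ is not the relevant constant either), but since the argument uses only that this topological constant is positive, the slip is immaterial.
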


From Theorem \ref{t3.1} we know that every complete $K-$surface in $\hr$ is properly
embedded and it is compact or homeomorphic to $\r^2$. So, we will show that it
cannot be homeomorphic to $\r ^2$. This follows from the following result.

\begin{teo}\label{lnor2}
For $K>0$ (or $H > 1/2$) there is no properly embedded $K-$surface ($H-$surface) in
$\hr$ with finite topology and one end.
\end{teo}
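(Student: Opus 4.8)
The plan is to pit the horizontal distance estimate (Theorem \ref{testhor}) against the vertical height estimate (Corollary \ref{estver}). The former confines $S$ to a region bounded in the horizontal directions, while the latter forbids $S$ from extending arbitrarily far in the $t$-direction once it is so confined. Since a non-compact one-ended surface must escape vertically, these two facts pull in opposite directions and yield the contradiction.

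First I would fix the topology and trap the surface. Suppose such an $S$ exists. For the $K$-surface case, Theorem \ref{t3.1} shows $S$ is homeomorphic to $\s^2$ or $\r^2$, and being non-compact with one end it must be a plane; in the $H$-surface case I argue directly from the hypotheses. Applying the distance estimate of Theorem \ref{testhor} along a foliation of vertical planes — exactly the mechanism of Corollary \ref{nuevocorolario} — shows $S$ lies in a vertical cylinder $\mathcal{C}=\mathcal{N}(\gamma)\times\r$ over a bounded tubular neighborhood $\mathcal{N}(\gamma)$ of a horizontal geodesic, so that $\pi(S)$ is contained in a compact subset of $\hc$. I would then read off the behaviour of the unique end: since $S$ is properly embedded in $\mathcal{C}$, the preimage of each slab $\mathcal{N}(\gamma)\times[-M,M]$ is compact, so outside a compact part of the annular end one has $\abs{h}>M$; as the end is connected this forces $h>M$ throughout, or $h<-M$ throughout, for each large $M$. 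Hence $h$ tends to exactly one of $\pm\infty$ along the end, and after the vertical reflection $t\mapsto -t$ if necessary we may assume $h\to+\infty$ there, so that $a:=\inf_S h$ is finite while $\sup_S h=+\infty$.

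Finally I would apply the vertical estimate. For a regular value $\tau>a$ of $h$, set $S_\tau=S\cap\{t\leq\tau\}$. Because $S\subset\mathcal{C}$ and $S$ is proper, $S_\tau$ lies in the compact region $\mathcal{N}(\gamma)\times[a,\tau]$ and is a compact embedded $K$-surface ($H$-surface) with $\partial S_\tau\subset\h^2\times\{\tau\}$. Reflecting in this slice and invoking Corollary \ref{estver} (for $H>1/2$, the vertical estimates of \cite{AEG1}) — whose constant is finite because $\h^2$ has Gauss curvature the constant $-1$ — bounds the depth of $S_\tau$ below the slice, giving $h\geq\tau-2c_K$ on $S_\tau$, and hence $a\geq\tau-2c_K$. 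Letting $\tau\to+\infty$ forces $a=+\infty$, a contradiction.

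The step I expect to be the main obstacle is the confinement of $S$ to a cylinder, i.e. the reduction to the case where $\pi(S)$ is precompact. The non-constant examples of Section 4 are properly embedded one-ended surfaces of positive extrinsic curvature whose horizontal projections are unbounded, so $K>0$ alone cannot suffice: for such a spreading surface the sets $S_\tau$ would fail to be compact and the height estimate would not apply. It is precisely the constancy of $K$ (or $H>1/2$), entering through the rotational-sphere comparison in the proof of Theorem \ref{testhor}, that makes the cylinder confinement available; once $\pi(S)$ is precompact the remaining height-estimate argument is a soft limiting contradiction.
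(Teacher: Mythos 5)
Your proposal is correct and follows essentially the same route as the paper, which itself only sketches the argument by citing \cite[Theorem 1.1]{NR}: confinement of $S$ to a vertical cylinder via the horizontal distance estimate of Theorem \ref{testhor} together with the Plane Separation Lemma (the mechanism you invoke through Corollary \ref{nuevocorolario}), followed by the vertical height estimate with respect to horizontal slices (your use of Corollary \ref{estver} on the compact pieces $S_\tau$ is exactly the paper's ``Alexandrov reflection method using horizontal planes and Theorem \ref{testver}''). Your write-up merely makes explicit the end-behavior analysis ($h\to\pm\infty$ along the end) and the limiting contradiction $a\geq\tau-2c_K$ that the paper leaves implicit, and your closing remark correctly identifies the Plane Separation/cylinder step, available only for constant $K$ (or $H>1/2$), as the crux.
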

\begin{proof}
We only outline the main steps of the proof since, in essence, it is the same as in
\cite{NR} for surfaces of constant mean curvature greater than $1/\sqrt{3}$.

Note that the Plane Separation Lemma is valid for properly embedded surfaces with
constant positive extrinsic curvature, so, Theorem \ref{testhor} ensures us that
such a surface must be contained in a vertical cylinder. Thus the Alexandrov
reflection method using horizontal planes and Theorem \ref{testver} completes the
proof as in \cite[Theorem 1.1]{NR}.

Analogously, the result is also valid for $H-$surfaces, $H>1/2$, using height
estimates in \cite{AEG1} and Corollary \ref{nuevocorolario}.
\end{proof}

With all of this,

\begin{teo}\label{tKconstant}
A complete immersion with positive constant extrinsic curvature $K$ in $\m
(\eps) \times \r$ is a rotational sphere (cf. Section 5).
\end{teo}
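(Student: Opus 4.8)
The plan is to combine all the ingredients assembled in the preceding sections and reduce the classification to the revolution surfaces described in Section 5. The starting point is Theorem \ref{t3.1}, which guarantees that any complete connected immersed $K$-surface in $\hr$ (and, by the parallel discussion, in $\s^2\times\r$) is properly embedded and homeomorphic either to $\s^2$ or to $\r^2$. The strategy is to eliminate the planar case and then identify the spherical case with the rotational examples of Propositions \ref{revh2r} and \ref{revsr}.

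First I would dispose of the noncompact case. In $\hr$, Theorem \ref{t3.1} tells us that an $\r^2$-type complete $K$-surface is either a graph over a convex domain of $\hc$ or has a simple end; in either situation it has finite topology and a single end. But Theorem \ref{lnor2} asserts precisely that no properly embedded $K$-surface in $\hr$ with finite topology and one end can exist. This contradiction rules out the planar topology entirely, so in $\hr$ every complete $K$-surface must be a topological sphere. In the $\s^2\times\r$ case one argues using the vertical height estimates: Corollary \ref{estver} bounds the height of any compact piece with boundary on a slice, and the Alexandrov reflection argument through horizontal slices forces a complete embedded $K$-surface with compact base $\s^2$ to have both a top and a bottom end, hence to be compact, i.e. a topological sphere.

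Having reduced to the case where $S$ is a topological sphere, I would invoke Theorem \ref{Q=0}: since $S$ is a $K$-surface that is a topological sphere, the quadratic form $Q\,dz^2$ of (\ref{Q}) vanishes identically on $S$. The vanishing $Q\equiv 0$ means $E=-g(\nu)h_z^2$, which forces the $(2,0)$-part of the quadratic form $A$ of (\ref{A}) to vanish, so $A$ is a (conformal) rotationally invariant structure; concretely, $Q\equiv0$ is exactly the condition that characterizes the rotational $K$-surfaces. I would then extract the geometric consequence of $Q\equiv0$ by feeding it back into the structural equations (\ref{nuzb}), (\ref{hzw}) and the expression (\ref{Qzb}) for $Q_{\zb}$, showing that the height function $h$ and the angle function $\nu$ depend on a single variable and that the resulting profile curve solves the ODE analyzed in Section 5. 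Matching this profile with (\ref{krev2})--(\ref{hrev2}) (respectively (\ref{ks2r})--(\ref{hs2r})) identifies $S$ with one of the rotational spheres.

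The main obstacle I expect is not the topological reduction, which follows cleanly from Theorems \ref{t3.1} and \ref{lnor2} and the height estimates, but rather the final identification step: deducing from the purely analytic statement $Q\equiv0$ that the surface is genuinely a surface of revolution and coincides with the explicit examples. The delicate point is translating the vanishing of the abstract Hopf-type differential $Q$ into the concrete statement that $\nu$, and hence the whole immersion up to ambient isometry, is invariant under the $\s\o(2)$-action. This requires showing that the integral curves of the field $T=\nabla h$ are the meridians and that the level sets of $\nu$ (equivalently of $h$) are the orbit circles, and then verifying that the resulting one-variable data satisfies precisely equations (\ref{Krev2}) and (\ref{hrev}), so that by the uniqueness in Propositions \ref{revh2r} and \ref{revsr} the surface is one of the stated rotational spheres.
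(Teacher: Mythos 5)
Your overall architecture (topological reduction, then $Q\equiv 0$ via Theorem \ref{Q=0}, then identification with the rotational examples) is the paper's, and your treatment of the $\hr$ case is correct: Theorem \ref{t3.1} plus Theorem \ref{lnor2} rules out the planar topology exactly as in the paper. But your reduction in $\s^2\times\r$ is a genuine gap. Theorem \ref{t3.1} is proved only in $\hr$ --- its proof rests on foliations by vertical planes $\gamma\times\r$ over complete geodesics of $\h^2$, which are isometric to $\r^2$ and along which strictly convex curves behave as in the Euclidean plane; over $\s^2$ the corresponding ``vertical planes'' are flat cylinders and the whole sweeping argument breaks down, so there is no ``parallel discussion'' giving embeddedness or the sphere/plane dichotomy there. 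Your fallback via height estimates is also internally inconsistent: the remark after Corollary \ref{estver} says a \emph{non-compact} properly embedded $K$-surface over a compact base has a top and a bottom end, which cannot ``hence'' make the surface compact, and two ends yield no contradiction (the one-end theorem, Theorem \ref{lnor2}, is only proved in $\hr$). Moreover the statement concerns \emph{immersions}, so any argument presupposing embeddedness is off target. The paper's actual route in $\s^2\times\r$ is intrinsic and much shorter: the Gauss equation gives $K(I)=K+\eps\nu^2$, so for $\eps=1$ one has $K(I)\geq K>0$, and Bonnet's theorem plus Gauss--Bonnet force any complete immersed $K$-surface to be a topological sphere.

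The second gap is the final step, which you candidly flag as ``the main obstacle'' but do not execute --- and it is precisely where the paper's work lies. It is not true that one can simply ``feed $Q\equiv0$ back into (\ref{nuzb}), (\ref{hzw}), (\ref{Qzb})'' and read off rotational invariance: knowing that $h$ and $\nu$ depend on one variable is a statement about functions on the abstract surface and does not by itself produce an ambient isometry group. The paper's mechanism is: take doubly orthogonal coordinates $(u,v)$ for $(I,II)$; since $Q\,dz^2$ is the $(2,0)$-part of $A$ for $II$, the vanishing $Q\equiv0$ means $A$ and $II$ are conformal, which forces $h_u h_v\equiv 0$; assuming $h_u=0$, the compatibility equations of Daniel \cite{D} together with a change of doubly orthogonal parameters (as in \cite[Theorem 3.1]{AEG2}) show that all of $m,n,k_1,k_2,h,\nu$ depend only on $v$; then the \emph{uniqueness part of Daniel's theorem} on isometric immersions into homogeneous $3$-manifolds converts this one-variable dependence into invariance of the immersion under a $1$-parameter group of ambient isometries, and compactness of $S$ forces that group to be the rotation group, after which Propositions \ref{revh2r} and \ref{revsr} finish. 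The appeal to Daniel's existence--uniqueness theorem is the key missing idea in your proposal; without it (or an equivalent Bonnet-type rigidity statement) the passage from $Q\equiv0$ to $\s\o(2)$-invariance is unproved.
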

\begin{proof}
By the Gauss equation \cite{D}, the Gauss curvature $K(I)$ of the surface
satisfies
$$
K(I)=K+\varepsilon\nu^2.
$$
Thus, for $\varepsilon=1$, $K(I)\geq K>0$ and, so, every complete $K-$surface in $\s
^2 \times \r$ must be a topological sphere from the Bonnet and Gauss-Bonnet
theorems. On the other hand, from Theorem \ref{t3.1} and Theorem \ref{lnor2}, we can
also state that every complete $K-$surface in $\hr$ must be a topological sphere.

Thus, by Theorem \ref{Q=0}, $Q=0$ on any complete immersion with positive constant
extrinsic curvature $K$ in $\m (\eps) \times \r$.

Now, we show that the immersion is rotational. Let us take doubly orthogonal
coordinates $(u,v)$ for $(I,II)$, that is,
\begin{eqnarray*}
I&=&mdu^2+ndv^2\\
II&=&k_1mdu^2+k_2ndv^2,
\end{eqnarray*}
then since the metric $II$ and the real quadratic form $A$ given by (\ref{A}) are
conformal (because $Q$ is the $(2,0)-$part of $A$ for $II$) then $h_u h_v\equiv 0$.

Thus, we can assume that $h_u$ vanishes locally.  Using the compatibility equations given
in \cite{D} and making a suitable change of doubly orthogonal parameters, as in \cite[Theorem 3.1]{AEG2},
one sees that all $m,n,k_1,k_2,h,\nu$ only depend on the second parameter $v$. Therefore, the
uniqueness part in \cite{D} gives that the immersion is invariant under a
1-parameter group of transformations ($(u,v)\rightarrow(u+t,v)$). To finish, as the surface
is compact, then it must be invariant by the group of rotations, so Proposition \ref{revh2r} and
Proposition \ref{revsr} give us the result.
\end{proof}

Note that Theorem \ref{t3.1} together with Theorem \ref{lnor2} shows that a complete
$K-$surface $S$ in $\hr$ must be embedded  and topologically a sphere. Then, the
Alexandrov reflection principle with respect to vertical planes proves $S$ is a
rotational sphere. This gives us an alternative proof to Theorem \ref{tKconstant} in
$\hr$.

Observe that a similar reasoning does not seem possible in $\s^2 \times \r$. That
is, the existence of the quadratic form $Q$ with isolated zeroes of negative index
appears to be essential in this case.

\vspace{.5cm}

{\bf Concluding remarks:}\\[-2mm]

It would be interesting to know which results of this paper extend to the other
complete, simply connected, homogeneous 3-manifolds. For example, does the Hadamard-
Stoker theorem hold in Heisenberg space, the Berger Spheres or the universal cover
of $\mathbb{PSL}(2 , \mathbb{R})$? The space ${\rm Sol}_3$ is foliated by totally
geodesic surfaces, each isometric to $\h ^2$ (in fact, there are two such orthogonal
foliations, and their intersection is an Anosov flow). Using the above techniques,
it is not hard to see that immersed compact surfaces of positive extrinsic curvature
in ${\rm Sol} _3$ are embedded spheres. Is this the case in Heisenberg space? If the
extrinsic curvature $K$ is a positive constant, is the surface a rotational sphere?

{\small J.M. Espinar and J.A. Gálvez were partially supported by Ministerio de
Educación y Ciencia Grant No. MTM2004-02746 and Junta de Andalucía Grant No.
FQM325.}

\end{document}